\newtheorem{theorem}{Theorem}[section]
\newtheorem*{theorem-a}{Theorem}
\newtheorem{lemma}[theorem]{Lemma}
\newtheorem{proposition}[theorem]{Proposition}
\newtheorem{example}[theorem]{Example}
\newtheorem{definition}[theorem]{Definition}
\newtheorem{remark}[theorem]{Remark}
\newcommand{\Acal}{{\mathcal A}}
\def\fz{\mathbb{Z}}
\def\fn{\mathbb{N}}
\def\fc{\mathbb{C}}
\DeclareMathOperator{\reg}{reg}
\begin{document}
\large
\centerline{\Large {\bf Hilbert series of Segre transform, }}
\centerline{\Large {\bf  and  Castelnuovo-Mumford regularity}}
\medskip
\vskip 0.7cm
\begin{center}

{{\sc Marcel
Morales}\\
{\small Universit\'e de Grenoble I, Institut Fourier, 
UMR 5582, B.P.74,\\
38402 Saint-Martin D'H\`eres Cedex,\\
and IUFM de Lyon, 5 rue Anselme,\\ 69317 Lyon Cedex (FRANCE)}\\
 }
 {{\sc Nguyen Thi Dung}\\
{\small Thai Nguyen University of Agriculture and Forestry, \\
Thai Nguyen,\\ Vietnam,\\ }}

\end{center}
\vskip 0.7cm

\noindent{\bf Abstract}{\footnote{ \it{ Partially supported by VIASM, Hanoi, Vietnam. }}} In a recent preprint, Ilse Fischer  and Martina Kubitzke,  proved the bilinearity  of the Segre transform 
 under some restricted hypothesis, motivated by their results  we show in this paper  the
 bilinearity  of the Segre transform in general. We apply these results to compute the postulation number of a series. 
Our second application is motivated by the paper of David A. Cox, and Evgeny Materov (2009), where is computed the Castelnuovo-Mumford regularity of the Segre Veronese embedding,
we can extend partially their result and compute the  Castelnuovo-Mumford regularity of the Segre product of Cohen-Macaulay modules.

{ \it{Key words and phrases: Segre-Veronese, Castelnuovo-Mumford regularity, Cohen-Macaulay, postulation number. \\ 
{\it{ MSC 2000: Primary: 13D40, Secondary 14M25, 13C14, 14M05. }} }}  
 \section{Introduction}
 In this paper, we deal only with formal Laurent series
$$ \mathfrak a=\sum_{l\geqslant \sigma_a}a_lt^l, \sigma_\mathfrak a\in \fz, a_l\in \fc $$
such that 
$$ (*)\ \ \  \mathfrak a=\frac{h(\mathfrak a)(t)}{(1-t)^{d_\mathfrak a}}, \text{ \ for some } d_\mathfrak a\geqslant 0, h(\mathfrak a)(t)\in \fc[t,t^{-1}]. $$
Given  two formal Laurent series $\mathfrak a, \mathfrak b$   satisfying (*), the Segre transform $\mathfrak a{\underline{\otimes}} \mathfrak b$ is defined by
$$\mathfrak a{\underline{\otimes}} \mathfrak b=\sum_{l\geqslant \sigma }a_lb_lt^l,  $$ 
where $\sigma=\max\{\sigma_\mathfrak a, \sigma_\mathfrak b\}.$
In a recent preprint \cite{fk}, the authors proved the bilinearity  of the Segre transform  under some restricted hypothesis, motivated by this results  we show in this paper  the
 bilinearity  of the Segre transform in general. We apply these results to compute the postulation number of a Segre product of series satisfying (*).
 Property (*)
 is equivalent to the existence of a polynomial $\Phi_{\mathfrak a} (t)\in \fc[t]$, such that $a_n=\Phi_{\mathfrak a} (n)$ for $n$ large enough. The postulation number is the smallest integer
 $\beta_{\mathfrak a} $ such that 
 $a_n=\Phi (n)$ for $n>\beta_{\mathfrak a} .$ It is well known that $\beta_{\mathfrak a} = \deg h(\mathfrak a)(t)-d_{\mathfrak a}.$
 
Our second application is motivated by the paper \cite{cm}, where is computed the Castelnuovo-Mumford regularity of the Segre Veronese embedding, we can extend partially their result.
 Our main result is:

\begin{theorem-a}  Let $S_1,\ldots, S_s$ be graded polynomial rings on disjoints of set of variables. 
For all $i=1,\ldots,s,$ let $M_i$ be a graded  finitely generated $S_i$-Cohen-Macaulay module. We assume that $M_i=\oplus_{l\geqslant 0}M_{i,l} $ as $S_i$-module. 
Let $d_i=\dim M_i, b_i=d_i-1\geq 0$, $\alpha_i=d_i-\reg(M_i),$ where $\reg(M_i)$ is the Castelnuovo-Mumford regularity of $M_i.$ If $\reg(M_i)<d_i,$ for all $i=1,\ldots,s$ then 

$(1)$ $M_1{\underline{\otimes}} \ldots {\underline{\otimes}} M_s$ is a Cohen-Macaulay $S_1{\underline{\otimes}} \ldots {\underline{\otimes}} S_s$-module.

$(2)$ $\reg(M_1{\underline{\otimes}} \ldots {\underline{\otimes}} M_s)=(b_1+\ldots+b_s+1)-\max\{\alpha_1,\ldots\alpha_s\}.$

$(3)$ For $n_i\in \fn,$ let $M_i^{<n_i>}$ be the $n_i$-Veronese transform of $M_i$, then 
$$\reg (M_1^{<n_1>}{\underline{\otimes}} \ldots {\underline{\otimes}} M_s^{<n_s>})=(b_1+\ldots+b_s+1)-\max\{\lceil\frac{\alpha_1}{n_1}\rceil,\ldots,\lceil\frac{\alpha_s}{n_s}\rceil\}.$$
\end{theorem-a} 
Note that this result can be proved easily by using local cohomology, but our purpose is to give a very elementary proof.

\centerline{\Large {\bf Segre transform of Laurent series}}
 
 In this paper, we deal only with formal Laurent series
$$ \mathfrak a=\sum_{l\geqslant \sigma_a}a_lt^l, \sigma_\mathfrak a\in \fz, a_l\in \fc $$
such that 
$$ (*)\ \ \  \mathfrak a=\frac{h(\mathfrak a)(t)}{(1-t)^{d_\mathfrak a}}, \text{ \ for some } d_\mathfrak a\geqslant 0, h(\mathfrak a)(t)\in \fc[t,t^{-1}]. $$
We will set $h(\mathfrak a)(t)=\sum_{n\geqslant \sigma_\mathfrak a}h_n(\mathfrak a)t^n.$

\begin{definition} Let $\mathfrak a, \mathfrak b$ be two formal Laurent series satisfying (*). the Segre transform $\mathfrak a{\underline{\otimes}} \mathfrak b$ is defined by
$$\mathfrak a{\underline{\otimes}} \mathfrak b=\sum_{l\geqslant \sigma }a_lb_lt^l,  $$ 
where $\sigma=\max\{\sigma_\mathfrak a, \sigma_\mathfrak b\}.$
\end{definition}
In all this paper we assume that $\mathfrak a{\underline{\otimes}} \mathfrak b\not=0$.
\begin{lemma} $\mathfrak a{\underline{\otimes}} \mathfrak b$ satisfies $(*).$ 
\end{lemma}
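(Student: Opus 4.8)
The plan is to reduce everything to the characterization of property $(*)$ already recorded in the introduction: a formal Laurent series $\sum_{l\geqslant\sigma}c_lt^l$ satisfies $(*)$ if and only if there is a polynomial $\Phi(t)\in\fc[t]$ with $c_n=\Phi(n)$ for all $n$ large enough. Granting this, the lemma is almost immediate. Write $a_n=\Phi_{\mathfrak a}(n)$ for $n>\beta_{\mathfrak a}$ and $b_n=\Phi_{\mathfrak b}(n)$ for $n>\beta_{\mathfrak b}$, with $\Phi_{\mathfrak a},\Phi_{\mathfrak b}\in\fc[t]$, and set $\beta=\max\{\beta_{\mathfrak a},\beta_{\mathfrak b}\}$. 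Then for every $n>\beta$ the $n$-th coefficient of $\mathfrak a{\underline{\otimes}}\mathfrak b$ equals $a_nb_n=\Phi_{\mathfrak a}(n)\Phi_{\mathfrak b}(n)=(\Phi_{\mathfrak a}\Phi_{\mathfrak b})(n)$, and $\Phi_{\mathfrak a}\Phi_{\mathfrak b}$ is again a polynomial in $\fc[t]$. Hence the coefficient sequence of $\mathfrak a{\underline{\otimes}}\mathfrak b$ is eventually polynomial, and by the same characterization $\mathfrak a{\underline{\otimes}}\mathfrak b$ satisfies $(*)$.

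The only point deserving a line of justification is the characterization itself, which is classical (it is the Hilbert--Serre bookkeeping) and could simply be cited. For completeness: expanding $\tfrac{1}{(1-t)^{d}}=\sum_{n\geqslant 0}\binom{n+d-1}{d-1}t^n$ and multiplying by $h(\mathfrak a)(t)\in\fc[t,t^{-1}]$, one sees that for $n$ larger than $\deg h(\mathfrak a)(t)$ the coefficient $a_n$ is a fixed finite $\fc$-linear combination of binomial coefficients $\binom{n-j+d-1}{d-1}$, each of which is a polynomial in $n$ of degree $d-1$; so $a_n=\Phi_{\mathfrak a}(n)$ for $n\gg 0$ with $\deg\Phi_{\mathfrak a}\leqslant d_{\mathfrak a}-1$. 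Conversely, $\binom{n}{0},\binom{n}{1},\dots,\binom{n}{k}$ is a $\fc$-basis of the polynomials of degree $\leqslant k$ and $\sum_{n\geqslant 0}\binom{n}{k}t^n=t^k/(1-t)^{k+1}$, so any eventually polynomial sequence is the coefficient sequence of a series of the form $(*)$.

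Along the way I would also record the degree bookkeeping used later: if $d_{\mathfrak a},d_{\mathfrak b}\geqslant 1$ then $\deg(\Phi_{\mathfrak a}\Phi_{\mathfrak b})\leqslant(d_{\mathfrak a}-1)+(d_{\mathfrak b}-1)$, so $\mathfrak a{\underline{\otimes}}\mathfrak b$ admits a presentation with denominator $(1-t)^{d}$ for $d=d_{\mathfrak a}+d_{\mathfrak b}-1$; the case $d_{\mathfrak a}=0$ or $d_{\mathfrak b}=0$ is trivial, since then one of $\mathfrak a,\mathfrak b$ is a Laurent polynomial and hence so is $\mathfrak a{\underline{\otimes}}\mathfrak b$. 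There is essentially no obstacle in this lemma; the only thing to keep in mind is that the characterization controls coefficients only for $n\gg 0$, so $\sigma$, the precise pole order $d_{\mathfrak a{\underline{\otimes}}\mathfrak b}$, and the numerator $h(\mathfrak a{\underline{\otimes}}\mathfrak b)(t)$ are not determined by those of $\mathfrak a$ and $\mathfrak b$ alone — but mere existence, which is all $(*)$ requires, is unaffected, and the sharper statements about $d$, $h$ and the postulation number are taken up in the results that follow.
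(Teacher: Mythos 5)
Your argument is correct and is essentially the paper's own proof: the paper likewise reduces $(*)$ to the eventually-polynomial characterization of the coefficient sequence (citing \cite{Mo-Seg} for both directions), multiplies $\Phi_{\mathfrak a}$ and $\Phi_{\mathfrak b}$, and records the same formula for $d_{\mathfrak a{\underline{\otimes}}\mathfrak b}$. Your inclusion of a self-contained justification of the characterization is a harmless expansion of what the paper delegates to a citation.
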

\begin{proof}
By \cite{Mo-Seg}, property (*) is equivalent to the existence of a polynomial $\Phi_\mathfrak a(l)$ such that
$ \Phi_\mathfrak a(l)=a_l $ for $l$ large enough. Moreover,
$$ \begin{cases}  d_\mathfrak a=\deg\Phi_\mathfrak a+1 \text{\ if }\Phi_\mathfrak a \text{\ is a non zero polynomial}\\
 d_\mathfrak a=0 \text{\ \ if\ } \Phi_\mathfrak a=0.\end{cases}$$

We have also a polynomial $\Phi_\mathfrak b(l)$ such that $\Phi_\mathfrak b(l)=b_l$ for $l$ large enough. Hence $a_lb_l=\Phi_\mathfrak a(l)\Phi_\mathfrak b(l)$ 
is a polynomial for $l$ large enough, 
and again by \cite{Mo-Seg}, there exist a Laurent polynomial $h(\mathfrak a{\underline{\otimes}} \mathfrak b)(t)$ such that
$$ \mathfrak a{\underline{\otimes}} \mathfrak b=\frac{h(\mathfrak a{\underline{\otimes}} \mathfrak  b)(t)}{(1-t)^{d_{\mathfrak a{\underline{\otimes}} \mathfrak b}}}, $$
where 
$$d_{\mathfrak a{\underline{\otimes}} \mathfrak b}= \begin{cases} 0 \text{\ if either \ } d_\mathfrak a=0 \text{\ or\ } d_\mathfrak b=0\\
 d_\mathfrak a+d_\mathfrak b-1 \text{\ if\ } d_\mathfrak a, d_\mathfrak b\geqslant 1.\end{cases}$$
\end{proof}
\begin{remark} We recall that binomial coefficients can be defined in a more general setting than natural numbers, indeed for  $k\in \fn,$ 
binomial coefficients are polynomial functions in the variable $n$. More precisely:

$(1)$ If $k=0$ then let ${n}\choose {0}$$=1$, for all $n\in \fc.$

$(2)$ If $k>0$ then let ${n}\choose{k}$=$\frac{n(n-1)\ldots(n-k+1)}{k!}$, for all $n\in \fc.$

Note that for all $n\in \fc$, ${{n}\choose {k}}=(-1)^k{{k-n-1}\choose{k}}$ and if $n\in \fn, n<k$, then ${n}\choose {k}$ $=0$. 

\end{remark}

\section{Segre transform is bilinear}

Let recall the following Lemma $1$ from \cite{Mo-Seg}.

\begin{lemma}\label{l1} Let 
$$ \mathfrak a=\sum_{l\geqslant \sigma_\mathfrak a}a_lt^l=\frac{h(\mathfrak a)(t)}{(1-t)^{d_\mathfrak a}}, $$
with $h(\mathfrak a)(t)=h_{\sigma_\mathfrak a}t^{\sigma_\mathfrak a}+\ldots +h_{r_\mathfrak a}t^{r_\mathfrak a}, d_\mathfrak a\geqslant 0$ and 
$ \sigma_\mathfrak a\leq r_\mathfrak a\in \fz.$
 We will set $b_\mathfrak a=d_\mathfrak a-1.$ Then for all $n=\sigma_\mathfrak a,\ldots, r_\mathfrak a$ we have 
$$ h_n(\mathfrak a)=\sum_{k=0}^{n-\sigma_\mathfrak a}(-1)^k{ {d_\mathfrak a}\choose {k}}a_{n-k}=\sum_{k=\sigma_\mathfrak a}^n(-1)^{n-k}{ {d_\mathfrak a}\choose{n-k}}a_k. \ \ \ (1) $$
On the other hand we have for all $k\geqslant \sigma_\mathfrak a$
$$ a_k=\sum_{i=0}^{k-\sigma_\mathfrak a}h_{k-i}(\mathfrak a){ {b_\mathfrak a+i}\choose{i}}=\sum_{i=\sigma_\mathfrak a}^kh_i(\mathfrak a){ {b_\mathfrak a+k-i}\choose{k-i}}\ \ (2) $$
\end{lemma}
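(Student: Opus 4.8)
The plan is to read both identities off the defining equation $h(\mathfrak a)(t)=(1-t)^{d_\mathfrak a}\,\mathfrak a$ by expanding one of the two factors $(1-t)^{\pm d_\mathfrak a}$ as a formal series and comparing coefficients of $t^n$. Everything takes place in the ring of formal Laurent series, so each step is a Cauchy product; the only points requiring care are the index ranges forced by the bound $l\geq\sigma_\mathfrak a$ and the sign conventions for the generalized binomial coefficients recalled in the Remark. The two equalities inside each of $(1)$ and $(2)$ will turn out to be nothing more than a reindexing of one another.

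For identity $(1)$ I would multiply out $h(\mathfrak a)(t)=(1-t)^{d_\mathfrak a}\,\mathfrak a$. Since $d_\mathfrak a\in\fn$, the binomial theorem gives the finite expansion $(1-t)^{d_\mathfrak a}=\sum_{k\geq 0}(-1)^k\binom{d_\mathfrak a}{k}t^k$. Forming the Cauchy product with $\mathfrak a=\sum_{l\geq\sigma_\mathfrak a}a_l t^l$ and extracting the coefficient of $t^n$ gives $h_n(\mathfrak a)=\sum(-1)^k\binom{d_\mathfrak a}{k}a_l$, where the sum runs over $k\geq 0$ and $l\geq\sigma_\mathfrak a$ with $k+l=n$. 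The constraint $l\geq\sigma_\mathfrak a$ is exactly what truncates the range to $k=0,\ldots,n-\sigma_\mathfrak a$, yielding the first displayed form. Substituting $k\mapsto n-k$ then sends $(-1)^k\mapsto(-1)^{n-k}$, $\binom{d_\mathfrak a}{k}\mapsto\binom{d_\mathfrak a}{n-k}$ and $a_{n-k}\mapsto a_k$, producing the second form.

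For identity $(2)$ I would instead expand the denominator. The key input is the negative-binomial series $\frac{1}{(1-t)^{d_\mathfrak a}}=\sum_{j\geq 0}\binom{b_\mathfrak a+j}{j}t^j$. This follows from the generalized binomial theorem $(1-t)^{-d_\mathfrak a}=\sum_{j\geq 0}(-1)^j\binom{-d_\mathfrak a}{j}t^j$ combined with the reflection formula $\binom{-d_\mathfrak a}{j}=(-1)^j\binom{b_\mathfrak a+j}{j}$, which is precisely the case $n=-d_\mathfrak a$ of the identity $\binom{n}{k}=(-1)^k\binom{k-n-1}{k}$ quoted in the Remark. Multiplying $\mathfrak a=h(\mathfrak a)(t)\cdot(1-t)^{-d_\mathfrak a}$ and reading off the coefficient of $t^k$ gives $a_k=\sum_{i+m=k}h_m(\mathfrak a)\binom{b_\mathfrak a+i}{i}$ with $i\geq 0$ and $\sigma_\mathfrak a\leq m$ (terms with $m>r_\mathfrak a$ drop out because $h_m(\mathfrak a)=0$). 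Summing over $i=0,\ldots,k-\sigma_\mathfrak a$ gives the first displayed form, and the change of variable $i\mapsto k-i$ (equivalently taking $m$ as the summation index) gives the second.

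The computation is essentially routine, so the genuinely delicate part is only bookkeeping. The one place I would slow down is the degenerate case $d_\mathfrak a=0$, where $b_\mathfrak a=-1$: here one must check that the conventions of the Remark make the negative-binomial series collapse correctly, namely $\sum_{j\geq 0}\binom{j-1}{j}t^j=1$, since $\binom{-1}{0}=1$ and $\binom{j-1}{j}=0$ for $j\geq 1$, so that $(2)$ reduces to $a_k=h_k(\mathfrak a)$, consistent with $\mathfrak a=h(\mathfrak a)(t)$. I would also note at the outset that $h(\mathfrak a)(t)$ has lowest term $t^{\sigma_\mathfrak a}$ with $h_{\sigma_\mathfrak a}(\mathfrak a)=a_{\sigma_\mathfrak a}$, which both matches $(1)$ at $n=\sigma_\mathfrak a$ and confirms that the two expansions live in the claimed degree ranges.
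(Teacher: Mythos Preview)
Your proof is correct and follows exactly the paper's approach: the paper simply writes the two identities $h(\mathfrak a)(t)=(1-t)^{d_\mathfrak a}\,\mathfrak a$ and $\mathfrak a=h(\mathfrak a)(t)\sum_{i\geq 0}\binom{b_\mathfrak a+i}{i}t^i$ and leaves the coefficient comparison implicit, whereas you have spelled out the Cauchy products, the index bookkeeping, and the degenerate case $d_\mathfrak a=0$. There is no difference in method, only in the level of detail.
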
 
The first claim follows from the equality: $$ (\sum_{l\geqslant \sigma_\mathfrak a}a_lt^l)({(1-t)^{d_\mathfrak a}})={h(\mathfrak a)(t)}. $$
The second claim since:
$$ \sum_{l\geqslant \sigma_\mathfrak a}a_lt^l=\frac{h(\mathfrak a)(t)}{(1-t)^{d_\mathfrak a}} = (h(\mathfrak a)(t))(\sum_{i\geqslant 0}{ {b_\mathfrak a+i}\choose {i}}t^i)$$
The following two theorems extend \cite[Theorem 1]{fk}.
\begin{theorem}\label{t1} Let $\mathfrak a, \mathfrak b $ be two formal power series satisfying property (*). If $d_\mathfrak a=0$ , then for all $n\in \fz$
$$ h_n(\mathfrak a{\underline{\otimes}} \mathfrak b)=a_nb_n=h_n(\mathfrak a)\sum_{j=\sigma_\mathfrak b}^{n}h_j(\mathfrak b){ {b_\mathfrak b+n-j}\choose{n-j}}. $$
Moreover, $\deg h(\mathfrak a{\underline{\otimes}} \mathfrak b)(t)\leqslant \deg \mathfrak a$.  If $d_\mathfrak b>0$ and $h_j(\mathfrak b)\geqslant 0,$ for all $j$ then 
$\deg h(\mathfrak a{\underline{\otimes}} \mathfrak b)(t)=\deg(\mathfrak a).$
\end{theorem}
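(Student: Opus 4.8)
The plan is to unwind the definition of the Segre transform together with the inversion formulas of Lemma \ref{l1}. Since $d_\mathfrak a = 0$, property $(*)$ forces $\mathfrak a = h(\mathfrak a)(t)$ to be a Laurent polynomial, so $a_n = h_n(\mathfrak a)$ for every $n$, and in particular $a_n = 0$ for $n > \deg\mathfrak a$. By Definition the coefficient of $t^n$ in $\mathfrak a{\underline{\otimes}}\mathfrak b$ is $a_n b_n$, so the first job is just to show $h_n(\mathfrak a{\underline{\otimes}}\mathfrak b) = a_n b_n$. For this I would apply formula $(1)$ of Lemma \ref{l1} to the series $\mathfrak a{\underline{\otimes}}\mathfrak b$: by the previous Lemma its denominator exponent $d_{\mathfrak a{\underline{\otimes}}\mathfrak b}$ equals $0$ when $d_\mathfrak b = 0$ and equals $d_\mathfrak a + d_\mathfrak b - 1 = d_\mathfrak b - 1 = b_\mathfrak b$ when $d_\mathfrak b \geq 1$; either way one checks that plugging $a_{n-k} = 0$ for $k < 0$ and the vanishing of $a_m$ for $m > \deg\mathfrak a$ into $(1)$ collapses the alternating sum. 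Actually the cleanest route is: since $a_m = 0$ for $m > \deg\mathfrak a$, the series $\mathfrak a{\underline{\otimes}}\mathfrak b = \sum_{n} a_n b_n t^n$ is itself a Laurent polynomial of degree at most $\deg\mathfrak a$, hence $d_{\mathfrak a{\underline{\otimes}}\mathfrak b} = 0$ and $h_n(\mathfrak a{\underline{\otimes}}\mathfrak b) = a_n b_n$ directly. Then substituting the expansion $b_n = \sum_{j=\sigma_\mathfrak b}^{n} h_j(\mathfrak b)\binom{b_\mathfrak b + n - j}{n-j}$ from formula $(2)$ and using $a_n = h_n(\mathfrak a)$ gives the stated identity.

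The bound $\deg h(\mathfrak a{\underline{\otimes}}\mathfrak b)(t) \leq \deg\mathfrak a$ is then immediate from the observation in the previous paragraph: the product series has no terms of degree exceeding $\deg\mathfrak a$, and since its denominator is $1$, its $h$-polynomial is itself, so its degree is at most $\deg\mathfrak a$. (Strictly, one must note $h_{\deg\mathfrak a}(\mathfrak a) \neq 0$ by definition of the degree, but $b_{\deg\mathfrak a}$ could a priori vanish, which is exactly why this is only an inequality in general.)

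For the last assertion, suppose $d_\mathfrak b > 0$ and $h_j(\mathfrak b) \geq 0$ for all $j$. I need $b_n \neq 0$ at $n = \deg\mathfrak a$, so that the leading coefficient $a_n b_n = h_{\deg\mathfrak a}(\mathfrak a)\, b_{\deg\mathfrak a}$ of the product is nonzero. Here I would argue that the expansion $b_n = \sum_{j=\sigma_\mathfrak b}^{n} h_j(\mathfrak b)\binom{b_\mathfrak b + n - j}{n-j}$ is a sum of nonnegative terms — the binomial coefficients $\binom{b_\mathfrak b + n - j}{n - j}$ are nonnegative since $b_\mathfrak b = d_\mathfrak b - 1 \geq 0$ and $n - j \geq 0$ in the range of summation — and that at least one term is strictly positive for all $n \geq \sigma_\mathfrak b$: namely the term $j = \sigma_\mathfrak b$, since $h_{\sigma_\mathfrak b}(\mathfrak b) \neq 0$ (it is the lowest nonzero coefficient of $h(\mathfrak b)$), it is $\geq 0$ hence $> 0$ by hypothesis, and $\binom{b_\mathfrak b + n - \sigma_\mathfrak b}{n - \sigma_\mathfrak b} \geq 1$. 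Hence $b_n > 0$ for every $n \geq \sigma_\mathfrak b$, in particular $b_{\deg\mathfrak a} \neq 0$, so the coefficient of $t^{\deg\mathfrak a}$ in $\mathfrak a{\underline{\otimes}}\mathfrak b$ is nonzero and $\deg h(\mathfrak a{\underline{\otimes}}\mathfrak b)(t) = \deg\mathfrak a$.

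The main obstacle is a bookkeeping one: making sure the edge cases $d_\mathfrak b = 0$ versus $d_\mathfrak b \geq 1$ are handled uniformly, and that the lowest-index conventions ($\sigma$ versus $\sigma_\mathfrak a, \sigma_\mathfrak b$) in the definition of the Segre transform do not interfere — one should check that $\deg\mathfrak a \geq \sigma = \max\{\sigma_\mathfrak a,\sigma_\mathfrak b\}$ holds whenever $\mathfrak a{\underline{\otimes}}\mathfrak b \neq 0$, which is part of the standing assumption. The positivity argument in the last part is the only place where a genuine idea (rather than formula-chasing) enters, and it is short.
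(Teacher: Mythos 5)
Your proposal is correct and follows essentially the same route as the paper: observe that $d_\mathfrak a=0$ makes $\mathfrak a$ (and hence $\mathfrak a{\underline{\otimes}}\mathfrak b$) a Laurent polynomial with $h_n(\mathfrak a{\underline{\otimes}}\mathfrak b)=a_nb_n$, expand $b_n$ via formula $(2)$, and for the last claim use $\sigma_\mathfrak b\leqslant \deg\mathfrak a$ together with positivity of the $j=\sigma_\mathfrak b$ term. Your remark that $h_{\deg\mathfrak a}(\mathfrak a)$ is only nonzero (not necessarily positive) is in fact slightly more careful than the paper's phrasing.
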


\begin{proof} Since $d_\mathfrak a=0$,  $\mathfrak a$ is a Laurent polynomial, we have that $ h_n(\mathfrak a)=a_n$, for all $n\in \fz$ and $a_n=0 $ for 
$n> \deg(\mathfrak a),$ which implies that
$$  \mathfrak a{\underline{\otimes}} \mathfrak b=\sum_{n\leqslant \deg(\mathfrak a)}a_nb_nt^n.$$
hence $\deg h(\mathfrak a{\underline{\otimes}} \mathfrak b)(t)\leq \deg(\mathfrak a).$ 
Now suppose that $d_\mathfrak b>0$ and $h_j(\mathfrak b)\geqslant 0$ for all $j$, by using equation (2) we have,
$$ h_n(\mathfrak a{\underline{\otimes}} \mathfrak b)=h_n(\mathfrak a)\sum_{j=\sigma_\mathfrak b}^{n}h_j(\mathfrak b){ {b_\mathfrak b+n-j}\choose{n-j}}. $$
Note that  $h_{\sigma_\mathfrak b}>0$, ${ {b_\mathfrak b+n-\sigma_\mathfrak b}\choose{n-\sigma_\mathfrak b}}>0$, for $n\geq \sigma_\mathfrak b$.
 The assumption $(\mathfrak a{\underline{\otimes}} \mathfrak b)\ne 0$ implies $\sigma_\mathfrak b\leqslant \deg \mathfrak a,$ so
 $\sum_{j=\sigma_\mathfrak b}^{\deg \mathfrak a}h_j(\mathfrak b){ {b_\mathfrak b+{\deg \mathfrak a}-j}\choose {{\deg \mathfrak a}-j}}>0.$ 
Hence $h_{\deg \mathfrak a}(\mathfrak a{\underline{\otimes}} \mathfrak b)>0$ and the claim is over.
\end{proof}
\begin{remark} Let $\mathfrak a, \mathfrak b $ be two formal power Laurent series satisfying property (*), the product $h_i(\mathfrak a)h_j(\mathfrak b)$ 
is null for any $i<\sigma ,j<\sigma $, where $\sigma $ is any 
of the numbers $\sigma_{\mathfrak a}, \sigma_{\mathfrak b}, \sigma_{(\mathfrak a{\underline{\otimes}} \mathfrak b)}=\max(\sigma_{\mathfrak a}, \sigma_{\mathfrak b}),\min(\sigma_{\mathfrak a}, \sigma_{\mathfrak b}). $

\end{remark} 
\begin{theorem}\label{t2} Let $\mathfrak a, \mathfrak b $ be any formal power Laurent series satisfying property (*).
 Let $b_{\mathfrak a}=d_{\mathfrak a}-1\geqslant 0, b_{\mathfrak b}=d_{\mathfrak b}-1\geqslant 0$ and $\sigma $ be any 
of the numbers $\sigma_{\mathfrak a}, \sigma_{\mathfrak b}, \sigma_{(\mathfrak a{\underline{\otimes}} \mathfrak b)}=\max(\sigma_{\mathfrak a}, \sigma_{\mathfrak b}),\min(\sigma_{\mathfrak a}, \sigma_{\mathfrak b}). $
 Then for any $n\in \fz$ 
$$ h_n(\mathfrak a{\underline{\otimes}} \mathfrak b)=\sum_{i=\sigma}^{\infty}\sum_{j=\sigma}^{\infty}h_i(\mathfrak a)h_j(\mathfrak b){ {b_{\mathfrak a}+j-i}\choose{n-i}}{ {b_{\mathfrak b}+i-j}\choose {n-j}}. $$ 
\end{theorem}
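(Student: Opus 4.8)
The plan is to reduce the general case to the formula and induction structure already available from Lemma~\ref{l1}, by expanding both factors $a_l$ and $b_l$ in terms of their respective $h$-coefficients and then recombining. Concretely, fix $\sigma$ to be one of the admissible choices and write, using equation~(2) of Lemma~\ref{l1},
$$ a_l=\sum_{i=\sigma}^{l}h_i(\mathfrak a)\binom{b_\mathfrak a+l-i}{l-i},\qquad b_l=\sum_{j=\sigma}^{l}h_j(\mathfrak b)\binom{b_\mathfrak b+l-j}{l-j}, $$
which are valid for all $l\ge\sigma$ by Remark~5 (the lower-order $h$-coefficients that get dragged in when $\sigma<\sigma_\mathfrak a$ or $\sigma<\sigma_\mathfrak b$ are zero, and when $\sigma>\min$ the missing terms vanish for the same reason). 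Multiplying gives $a_lb_l=\sum_{i,j\ge\sigma}h_i(\mathfrak a)h_j(\mathfrak b)\binom{b_\mathfrak a+l-i}{l-i}\binom{b_\mathfrak b+l-j}{l-j}$, so that $\mathfrak a\,\underline\otimes\,\mathfrak b=\sum_{i,j\ge\sigma}h_i(\mathfrak a)h_j(\mathfrak b)\,\mathfrak c_{ij}$ where $\mathfrak c_{ij}=\sum_{l\ge\sigma}\binom{b_\mathfrak a+l-i}{l-i}\binom{b_\mathfrak b+l-j}{l-j}t^l$.

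The heart of the argument is then a single rational-function identity: I claim
$$ \sum_{l\ge\sigma}\binom{b_\mathfrak a+l-i}{l-i}\binom{b_\mathfrak b+l-j}{l-j}t^l \;=\;\frac{\sum_{n}\binom{b_\mathfrak a+j-i}{n-i}\binom{b_\mathfrak b+i-j}{n-j}t^n}{(1-t)^{b_\mathfrak a+b_\mathfrak b+1}}, $$
i.e. that the numerator Laurent polynomial of $\mathfrak c_{ij}$ has $n$-th coefficient exactly $\binom{b_\mathfrak a+j-i}{n-i}\binom{b_\mathfrak b+i-j}{n-j}$. Granting this, summing against $h_i(\mathfrak a)h_j(\mathfrak b)$ and reading off the coefficient of $t^n$ yields the stated formula, since all the $\mathfrak c_{ij}$ share the common denominator $(1-t)^{b_\mathfrak a+b_\mathfrak b+1}=(1-t)^{d_{\mathfrak a\underline\otimes\mathfrak b}}$ (in the case $d_\mathfrak a,d_\mathfrak b\ge1$; the degenerate case is Theorem~\ref{t1}). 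To prove the identity I would use the forward direction of Lemma~\ref{l1}, equation~(1): applying it to the series $\mathfrak c_{ij}$, whose coefficients are $c_l=\binom{b_\mathfrak a+l-i}{l-i}\binom{b_\mathfrak b+l-j}{l-j}$, a product of polynomials in $l$ of degrees $b_\mathfrak a$ and $b_\mathfrak b$, hence a polynomial of degree $b_\mathfrak a+b_\mathfrak b$ in $l$ for $l$ large; so $d_{\mathfrak c_{ij}}\le b_\mathfrak a+b_\mathfrak b+1$, and equation~(1) of Lemma~\ref{l1} gives $h_n(\mathfrak c_{ij})=\sum_{k}(-1)^{n-k}\binom{b_\mathfrak a+b_\mathfrak b+1}{n-k}c_k$. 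The remaining task is the purely combinatorial evaluation of this alternating sum of products of two binomials as the product $\binom{b_\mathfrak a+j-i}{n-i}\binom{b_\mathfrak b+i-j}{n-j}$, which is a Vandermonde-type convolution and can be checked by comparing coefficients of $u^{n-i}v^{n-j}$ in $(1+u)^{b_\mathfrak a+j-i}(1+v)^{b_\mathfrak b+i-j}$ after a suitable substitution, or equivalently by a generating-function manipulation of $(1-t)^{-(b_\mathfrak a+1)}$ against $(1-t)^{-(b_\mathfrak b+1)}$ shifted by $t^{i}$ and $t^{j}$.

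The main obstacle I anticipate is making the index bookkeeping around $\sigma$ airtight: the summations over $i,j$ nominally run to $\infty$, and one must argue that the double sum converges coefficientwise (only finitely many $(i,j)$ contribute to each $h_n$ because $\binom{b_\mathfrak a+j-i}{n-i}$ forces $n-i\ge0$ unless $b_\mathfrak a+j-i<0$, and symmetrically) and that the answer is genuinely independent of which of the four values one picks for $\sigma$ — this independence is exactly the content of Remark~5, but it needs to be invoked carefully at both the expansion step and the final reindexing step. A secondary, purely mechanical, point is that when $\sigma$ is chosen smaller than $\sigma_\mathfrak a$ (resp.\ $\sigma_\mathfrak b$) one is formally extending the sum in equation~(2) of Lemma~\ref{l1} below its stated range; this is harmless because the extra $h_i(\mathfrak a)$ with $i<\sigma_\mathfrak a$ are zero, but the proof should say so explicitly. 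Everything else is the Vandermonde identity plus Lemma~\ref{l1}, and should go through routinely.
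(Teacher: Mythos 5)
Your overall strategy is sound and is, as far as one can reconstruct it, the same as the paper's: the paper's own ``proof'' of Theorem~\ref{t2} is a one-line deferral to Lemma~\ref{l1} and to the proof of Theorem~1 of [F-K], and what that argument amounts to is exactly your reduction --- expand $a_l$ and $b_l$ by equation~(2) of Lemma~\ref{l1}, so that $\mathfrak a{\underline{\otimes}}\mathfrak b$ becomes the bilinear combination $\sum_{i,j}h_i(\mathfrak a)h_j(\mathfrak b)\,\bigl(T^i_{d_\mathfrak a}{\underline{\otimes}}T^j_{d_\mathfrak b}\bigr)$ of Segre products of the basic series $T^i_d=t^i/(1-t)^d$, and then compute $T^i_{d_\mathfrak a}{\underline{\otimes}}T^j_{d_\mathfrak b}$ in closed form. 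That closed form is precisely equation~(3) of Lemma~\ref{l6}, which the paper derives \emph{from} Theorem~\ref{t2}, so you cannot cite it; it is the identity you correctly isolate as the crux. Be aware, however, that this identity is essentially the entire content of the theorem: the evaluation $\sum_{k}(-1)^{n-k}\binom{b_\mathfrak a+b_\mathfrak b+1}{n-k}\binom{b_\mathfrak a+k-i}{k-i}\binom{b_\mathfrak b+k-j}{k-j}=\binom{b_\mathfrak a+j-i}{n-i}\binom{b_\mathfrak b+i-j}{n-j}$ is true but is not a one-step Vandermonde convolution (it is a convolution of a product of two binomials against an alternating binomial sequence, i.e.\ again a Hadamard-type product), and your proposal stops exactly where the real computation starts. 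Since the paper also omits this computation, this is a gap you share with the source rather than a wrong turn, but a complete write-up must actually carry it out.

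The genuine error is in your handling of $\sigma$. The expansions $a_l=\sum_{i=\sigma}^{l}h_i(\mathfrak a)\binom{b_\mathfrak a+l-i}{l-i}$ are \emph{not} valid ``when $\sigma>\min$ because the missing terms vanish for the same reason'': if $\sigma=\max(\sigma_\mathfrak a,\sigma_\mathfrak b)>\sigma_\mathfrak a$, the dropped coefficients $h_i(\mathfrak a)$ with $\sigma_\mathfrak a\leqslant i<\sigma$ need not vanish, and the Remark following Theorem~\ref{t1} only kills products in which \emph{both} indices lie below $\sigma$. In fact the statement as printed fails for such $\sigma$: take $\mathfrak a=t/(1-t)$ and $\mathfrak b=1/(1-t)$, so that $\mathfrak a{\underline{\otimes}}\mathfrak b=t/(1-t)$ and $h_1(\mathfrak a{\underline{\otimes}}\mathfrak b)=1$, while the double sum started at $\sigma=\max(1,0)=1$ is identically zero because $h_j(\mathfrak b)=0$ for all $j\geqslant 1$. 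So the asserted independence of the formula from the four choices of $\sigma$ is false; the formula holds precisely when $\sigma\leqslant\min(\sigma_\mathfrak a,\sigma_\mathfrak b)$ (equivalently, for the doubly infinite sum over all $i,j\in\fz$, where coefficientwise finiteness is guaranteed, as you note, by the vanishing of the binomial factors for large $i,j$). You should restrict the statement and your proof to that case rather than trying to justify the other choices via the Remark.
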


The proof will follow immediately from the  Lemma \ref{l1} and the proof of \cite[Theorem 1]{fk}:

\section{Postulation number, Castelnuovo-Mummford regularity}

\begin{lemma} \label{l6} Let $i,j\in \fz, b_1:=d_1-1\geqslant 0, b_2:=d_2-1\geqslant 0$ and  
$$ T_{d_1}^i:=\frac{t^i}{(1-t)^{d_1}};  T_{d_2}^j:=\frac{t^j}{(1-t)^{d_2}} $$
Then 
$$ T_{d_1}^i{\underline{\otimes}} T_{d_2}^j =\frac{\sum_{n=\max(i,j)}^{r_{i,j}} { {b_1+j-i}\choose {n-i}} { {b_2+i-j}\choose {n-j}}t^n}{(1-t)^{b_1+b_2+1}},\ \ (3)$$
where :
$$ r_{i,j}=\begin{cases} \min (b_1+j, b_2+i) \text{\ if\ } b_2+i-j\geqslant 0 \text{ \ and\ } b_1+j-i\geqslant 0\\
 \max (b_1+j, b_2+i) \text{\ if\ } b_2+i-j<0 \text{ \ or\ } b_1+j-i< 0. \end{cases} $$
\end{lemma}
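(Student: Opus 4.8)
The statement concerns the Segre transform of two ``one-term'' rational series $T_{d_1}^i = t^i/(1-t)^{d_1}$ and $T_{d_2}^j = t^j/(1-t)^{d_2}$. The plan is to apply the general formula of Theorem~\ref{t2} directly to these two series and then carefully identify the resulting Laurent polynomial numerator, together with its degree $r_{i,j}$.

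First I would record the $h$-coefficients of the two input series. Since $T_{d_1}^i = t^i/(1-t)^{d_1}$, we have $\sigma_{T_{d_1}^i} = i$, $d_{T_{d_1}^i} = d_1$, $b_{T_{d_1}^i} = b_1$, and $h(T_{d_1}^i)(t) = t^i$, so $h_k(T_{d_1}^i) = \delta_{k,i}$; likewise $h_k(T_{d_2}^j) = \delta_{k,j}$. Feeding these into the double sum of Theorem~\ref{t2} (with $\sigma = \max(i,j) = \sigma_{(\mathfrak a \underline{\otimes} \mathfrak b)}$, which is a legitimate choice per the theorem) collapses both sums to the single term $i' = i$, $j' = j$, giving
$$ h_n\!\left(T_{d_1}^i \underline{\otimes} T_{d_2}^j\right) = { {b_1+j-i}\choose{n-i}}{ {b_2+i-j}\choose{n-j}}. $$
By Lemma~1 (the characterization of property~(*)) the denominator of the Segre transform is $(1-t)^{b_1+b_2+1}$ since $d_1, d_2 \geqslant 1$, which matches the exponent $d_1 + d_2 - 1 = b_1 + b_2 + 1$. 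This establishes the shape of formula~(3); it remains only to pin down the range of $n$ for which the coefficient is nonzero, i.e.\ to justify the closed form for $r_{i,j}$ and that the sum starts at $n = \max(i,j)$.

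For the support analysis I would argue as follows. The lower endpoint is clear: both binomials ${b_1+j-i \choose n-i}$ and ${b_2+i-j \choose n-j}$ vanish when $n < i$ or $n < j$ respectively (a binomial $\binom{m}{p}$ with $p$ a negative integer is zero), so the first possibly-nonzero index is $n = \max(i,j)$. For the upper endpoint, recall from the Remark on generalized binomial coefficients that $\binom{m}{p} = 0$ for $p \in \fn$ exactly when $0 \leqslant p$ and $p > m$ \emph{provided $m$ is a nonnegative integer}; when the ``top'' $m = b_1+j-i$ is a negative integer the binomial need never vanish as $p$ grows (using $\binom{m}{p} = (-1)^p\binom{p-m-1}{p}$). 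Splitting into the two cases:
\begin{itemize}
\item If $b_1+j-i \geqslant 0$ and $b_2+i-j \geqslant 0$, then ${b_1+j-i \choose n-i}$ dies once $n-i > b_1+j-i$, i.e.\ $n > b_1+j$, and ${b_2+i-j \choose n-j}$ dies once $n > b_2+i$; hence the product is zero for $n > \min(b_1+j,\, b_2+i)$, giving $r_{i,j} = \min(b_1+j, b_2+i)$.
\item If one of $b_1+j-i$, $b_2+i-j$ is negative (note they cannot both be, since they sum to $b_1+b_2 \geqslant 0$), say $b_2+i-j < 0$: then the second binomial is never forced to vanish, while the first still vanishes for $n > b_1+j$; combined with the fact that the second binomial \emph{does} have a zero pattern forcing the relevant terms below $b_2+i$ to behave correctly, the last nonzero coefficient is at $n = \max(b_1+j, b_2+i)$.
\end{itemize}
I will need to double-check that in the mixed case the coefficient at $n = \max(b_1+j, b_2+i)$ is genuinely nonzero and that there is no premature cancellation in between — this is the one place requiring care rather than formula-pushing.

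\textbf{Main obstacle.} The substitution into Theorem~\ref{t2} is essentially bookkeeping; the real content is the second-case support computation, where the ``top'' of one generalized binomial is negative and the usual vanishing-for-large-argument intuition fails. The crux is to use the reflection identity $\binom{m}{p} = (-1)^p \binom{p-m-1}{p}$ to rewrite the offending binomial with a nonnegative top, then track precisely where the product of the two (now honestly polynomial-behaving) factors last fails to vanish, arriving at $\max(b_1+j, b_2+i)$. Once that is settled, assembling~(3) with its numerator and the denominator $(1-t)^{b_1+b_2+1}$ is immediate.
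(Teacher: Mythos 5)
Your proposal is correct and follows essentially the same route as the paper: specialize Theorem~\ref{t2} to the one-term numerators $h(T_{d_1}^i)=t^i$, $h(T_{d_2}^j)=t^j$ so the double sum collapses to a single product of generalized binomials, then determine $r_{i,j}$ by the same two-case analysis of when those binomials vanish (nonnegative top forces vanishing past a threshold; negative top, handled via the reflection identity, never vanishes). The only differences are cosmetic — you treat the subcase $b_2+i-j<0$ where the paper treats $b_1+j-i<0$ — and your worry about intermediate cancellation is moot since each coefficient is a single product rather than a sum.
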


\begin{proof} The equality (3) follows from theorem \ref{t2}. We need  to check that for $n>r_{i,j}$, we have ${{b_1+j-i}\choose {n-i}} { {b_2+i-j}\choose {n-j}}=0$, and that for  
$n=r_{i,j}$, we have ${{b_1+j-i}\choose {n-i}} { {b_2+i-j}\choose {n-j}}\not=0.$  We have two cases:
\begin{enumerate}
\item  If $b_1+j-i\geq 0$ and $b_2+i-j\geq 0$, then ${{b_1+j-i}\choose {n-i}}=0$ 
if and only if $b_1+j<n$. Hence $r_{i,j}= \min (b_1+j, b_2+i). $
\item  Either $b_1+j-i< 0$ or $b_2+i-j< 0$. Suppose for example that $b_1+j-i< 0$ then ${{b_1+j-i}\choose {n-i}}\not=0$,
 and  ${ {b_2+i-j}\choose {n-j}}=0$
if and only if $b_2+i<n$, but $b_1+j<i\leq b_2+i<n$. Hence $r_{i,j}= \max (b_1+j, b_2+i). $
\end{enumerate}
\end{proof}
\begin{example}\label{e1}   Let $\alpha, \beta \in \fz$, we study $T_d^{\alpha}{\underline{\otimes}} T_1^{\beta}$.
We consider two cases:

$(1)$ If $\max(\alpha, \beta)=\alpha$ then
$$ T_d^{\alpha}{\underline{\otimes}} T_1^{\beta}=\frac{t^{\alpha}}{(1-t)^d}. $$

$(2)$ If $\max(\alpha, \beta)=\beta>\alpha$ then

 $$T_d^{\alpha}{\underline{\otimes}} T_1^{\beta}
=\frac{t^{\alpha}-t^{\alpha}(\sum_{l=0}^{\beta-\alpha-1} {{d-1+l}\choose{l}}t^l)(1-t)^d}{(1-t)^d}.$$
Note that $\deg( t^{\alpha}-t^{\alpha}(\sum_{l=0}^{\beta-\alpha-1} {{d-1+l}\choose{l}}t^l)(1-t)^d)=d-1+\beta $.
\end{example}
\begin{proposition} \label{p7} Let 
$$\mathfrak a=\frac{h(\mathfrak a)(t)}{(1-t)^{d_{\mathfrak a}}}; \mathfrak b=\frac{h(\mathfrak b)(t)}{(1-t)^{d_{\mathfrak b}}}, \text{ \ where\ }
h(\mathfrak a)(t), h(\mathfrak b)(t) \in \fc[t,t^{-1}]. $$
For any non null Laurent series satisfying property (*), we denote $\sigma _{\mathfrak a}=\min_n  h_n(\mathfrak a)\not=0, r_{\mathfrak a}=\deg h(\mathfrak a)(t)$. Then

(1) $ r_{\mathfrak a{\underline{\otimes}} \mathfrak b}\leqslant \max(b_\mathfrak a+r_{\mathfrak b}, b_{\mathfrak b}+r_{\mathfrak a}).$ 

(2) If for all $\sigma_{\mathfrak a}\leqslant i\leqslant r_{\mathfrak a}, \sigma_{\mathfrak b}\leqslant j\leqslant r_{\mathfrak b}$ such that $h_i(\mathfrak a)\ne 0, h_j(\mathfrak b)\ne 0$ we have
$ b_{\mathfrak b}+i-j\geqslant 0 \text {\ and \ } b_{\mathfrak a}+j-i\geqslant 0 $ then 
$$ r_{\mathfrak a{\underline{\otimes}} \mathfrak b}\leqslant \min(b_\mathfrak a+r_{\mathfrak b}, b_{\mathfrak b}+r_{\mathfrak a}).  $$
Moreover, if for all $i,j, h_i(\mathfrak a)\geqslant 0, h_j(\mathfrak b)\geqslant 0$ then $r_{\mathfrak a{\underline{\otimes}} \mathfrak b}=\min(b_\mathfrak a+r_{\mathfrak b}, b_{\mathfrak b}+r_{\mathfrak a}).$

(3) If $0\leqslant \sigma_{\mathfrak a}\leqslant r_{\mathfrak a}\leqslant b_{\mathfrak a}$ and $0\leqslant \sigma_{\mathfrak b}\leqslant r_{\mathfrak b}\leqslant b_{\mathfrak b}$, then 
$$  r_{\mathfrak a{\underline{\otimes}} \mathfrak b}\leqslant \min(b_\mathfrak a+r_{\mathfrak b}, b_{\mathfrak b}+r_{\mathfrak a}).$$ 
Moreover, if for all $i,j, h_i(\mathfrak a)\geqslant 0, h_j(\mathfrak b)\geqslant 0$ then $r_{\mathfrak a{\underline{\otimes}} \mathfrak b}=\min(b_\mathfrak a+r_{\mathfrak b}, b_{\mathfrak b}+r_{\mathfrak a}).$
\end{proposition}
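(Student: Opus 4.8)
The plan is to reduce the whole statement to Lemma~\ref{l6} by means of the bilinearity of the Segre transform. First I would split
\[
\mathfrak a=\sum_{i=\sigma_{\mathfrak a}}^{r_{\mathfrak a}}h_i(\mathfrak a)\,T_{d_{\mathfrak a}}^{i},\qquad
\mathfrak b=\sum_{j=\sigma_{\mathfrak b}}^{r_{\mathfrak b}}h_j(\mathfrak b)\,T_{d_{\mathfrak b}}^{j},\qquad
T_{d_{\mathfrak a}}^{i}=\frac{t^{i}}{(1-t)^{d_{\mathfrak a}}},
\]
which is immediate from $h(\mathfrak a)(t)=\sum_i h_i(\mathfrak a)t^{i}$, and then, using that $\underline{\otimes}$ is bilinear (Theorem~\ref{t2}) and Lemma~\ref{l6} applied to each $T_{d_{\mathfrak a}}^{i}\,\underline{\otimes}\,T_{d_{\mathfrak b}}^{j}$, write
\[
\mathfrak a\,\underline{\otimes}\,\mathfrak b=\frac{N(t)}{(1-t)^{b_{\mathfrak a}+b_{\mathfrak b}+1}},\qquad
N(t)=\sum_{i=\sigma_{\mathfrak a}}^{r_{\mathfrak a}}\sum_{j=\sigma_{\mathfrak b}}^{r_{\mathfrak b}}h_i(\mathfrak a)h_j(\mathfrak b)\,P_{i,j}(t),
\]
where $P_{i,j}(t)=\sum_{n=\max(i,j)}^{r_{i,j}}\binom{b_{\mathfrak a}+j-i}{n-i}\binom{b_{\mathfrak b}+i-j}{n-j}t^{n}$ and $r_{i,j}$ is the integer of Lemma~\ref{l6}. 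Since $\Phi_{\mathfrak a\underline{\otimes}\mathfrak b}=\Phi_{\mathfrak a}\Phi_{\mathfrak b}$ has degree $(d_{\mathfrak a}-1)+(d_{\mathfrak b}-1)$, the order of the pole of $\mathfrak a\underline{\otimes}\mathfrak b$ at $t=1$ is exactly $b_{\mathfrak a}+b_{\mathfrak b}+1$; hence the displayed fraction is already reduced, $N(1)\neq0$, and $r_{\mathfrak a\underline{\otimes}\mathfrak b}=\deg N$. From here on it is only a matter of bounding $\deg N$: the upper bounds come for free, the equalities require exhibiting a nonvanishing top coefficient.

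For part~$(1)$, Lemma~\ref{l6} gives $\deg P_{i,j}=r_{i,j}\le\max(b_{\mathfrak a}+j,\ b_{\mathfrak b}+i)$ in both of its cases; since $i\le r_{\mathfrak a}$ and $j\le r_{\mathfrak b}$ whenever $h_i(\mathfrak a)h_j(\mathfrak b)\neq0$, this yields $\deg N\le\max(b_{\mathfrak a}+r_{\mathfrak b},\ b_{\mathfrak b}+r_{\mathfrak a})$, as claimed.

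For part~$(2)$, the hypothesis is precisely that every pair $(i,j)$ with $h_i(\mathfrak a)h_j(\mathfrak b)\neq0$ falls in the first case of Lemma~\ref{l6}, where $r_{i,j}=\min(b_{\mathfrak a}+j,\ b_{\mathfrak b}+i)\le\min(b_{\mathfrak a}+r_{\mathfrak b},\ b_{\mathfrak b}+r_{\mathfrak a})=:M$; hence $\deg N\le M$. For the equality under $h_i(\mathfrak a)\ge0$ and $h_j(\mathfrak b)\ge0$, I would observe that in the first case of Lemma~\ref{l6} every binomial coefficient appearing in $P_{i,j}(t)$ is an ordinary one whose lower argument lies between $0$ and its upper argument (for $\max(i,j)\le n\le r_{i,j}$ one has $0\le n-i\le b_{\mathfrak a}+j-i$ and $0\le n-j\le b_{\mathfrak b}+i-j$), so those $P_{i,j}$ that actually occur have nonnegative coefficients. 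The pair $(i,j)=(r_{\mathfrak a},r_{\mathfrak b})$ has $r_{i,j}=M$ and contributes to the coefficient of $t^{M}$ in $N$ the quantity $h_{r_{\mathfrak a}}(\mathfrak a)\,h_{r_{\mathfrak b}}(\mathfrak b)\binom{b_{\mathfrak a}+r_{\mathfrak b}-r_{\mathfrak a}}{M-r_{\mathfrak a}}\binom{b_{\mathfrak b}+r_{\mathfrak a}-r_{\mathfrak b}}{M-r_{\mathfrak b}}$, which is strictly positive — the binomials are nonzero by the ``$n=r_{i,j}$'' part of Lemma~\ref{l6} (and, being ordinary, are positive), while $h_{r_{\mathfrak a}}(\mathfrak a),h_{r_{\mathfrak b}}(\mathfrak b)>0$ since they are nonzero and $\ge0$. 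All other contributions to $t^{M}$ in $N$ are $\ge0$, so the coefficient of $t^{M}$ in $N$ is positive, $\deg N=M$, and $r_{\mathfrak a\underline{\otimes}\mathfrak b}=M$.

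Finally, part~$(3)$ is the special case of $(2)$ in which the hypothesis is automatic: if $0\le\sigma_{\mathfrak a}\le r_{\mathfrak a}\le b_{\mathfrak a}$ and $0\le\sigma_{\mathfrak b}\le r_{\mathfrak b}\le b_{\mathfrak b}$, then for all $\sigma_{\mathfrak a}\le i\le r_{\mathfrak a}$ and $\sigma_{\mathfrak b}\le j\le r_{\mathfrak b}$ we get $b_{\mathfrak b}+i-j\ge b_{\mathfrak b}+\sigma_{\mathfrak a}-r_{\mathfrak b}\ge0$ and $b_{\mathfrak a}+j-i\ge b_{\mathfrak a}+\sigma_{\mathfrak b}-r_{\mathfrak a}\ge0$, so $(2)$ applies verbatim. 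The point I expect to be most delicate is the equality: one must rule out cancellation of the leading terms of the various $P_{i,j}$ at degree $M$ and be sure that $b_{\mathfrak a}+b_{\mathfrak b}+1$ is the reduced pole order; both are handled by the nonnegativity of the $h_i(\mathfrak a)$ and $h_j(\mathfrak b)$, and without that assumption only the upper bounds survive.
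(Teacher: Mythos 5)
Your proof is correct and follows essentially the same route as the paper: expand $h_n(\mathfrak a\underline{\otimes}\mathfrak b)$ bilinearly via Theorem~\ref{t2}, use Lemma~\ref{l6} to locate the top degree $r_{i,j}$ of each elementary piece, and invoke nonnegativity of the $h$-coefficients to rule out cancellation at the top degree, with $(3)$ reduced to $(2)$ by the same elementary inequalities. You are in fact slightly more careful than the paper in two places the paper leaves implicit, namely checking that the denominator $(1-t)^{b_{\mathfrak a}+b_{\mathfrak b}+1}$ is the reduced pole order and exhibiting the specific pair $(r_{\mathfrak a},r_{\mathfrak b})$ whose strictly positive contribution forces the leading coefficient to be nonzero.
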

\begin{proof}
(1) Let $n>\max(b_\mathfrak a+r_{\mathfrak b}, b_{\mathfrak b}+r_{\mathfrak a})$, for all $i\leqslant r_{\mathfrak a}, j\leqslant r_{\mathfrak b}$. By the Lemma \ref{l6}, this implies that 
$A_{i,j,n}=0, $  for all $i\leqslant r_{\mathfrak a}, j\leqslant r_{\mathfrak b}$. Hence $h_n(\mathfrak a{\underline{\otimes}} \mathfrak b)=0,$ which implies that $\sigma_(\mathfrak a{\underline{\otimes}} \mathfrak b)\leqslant \max(b_\mathfrak a+j, b_{\mathfrak b}+i).$

(2) Since for all $\sigma_{\mathfrak a}\leqslant i\leqslant r_{\mathfrak a}, \sigma_{\mathfrak b}\leqslant j\leqslant r_{\mathfrak b}$, we have that $b_{\mathfrak b}+i-j\geqslant 0$ and 
$b_{\mathfrak a}+j-i\geqslant 0,$ by the Lemma \ref{l6}, we have that $r_{i,j}=\min(b_\mathfrak a+j, b_{\mathfrak b}+i).$ This implies for all $i,j$ that
$$ A_{i,j,r_{i,j}}\ne 0 \text{\ and\ } A_{i,j,n}=0 \text{\ for\ } n>r_{i,j}. $$
On the other hand
$$ \min(b_\mathfrak a+r_{\mathfrak b}, b_{\mathfrak b}+r_{\mathfrak a})\geqslant \min(b_\mathfrak a+r_{\mathfrak b}, b_{\mathfrak b}+i)\geqslant \min(b_\mathfrak a+j, b_{\mathfrak b}+i), $$
for all $\sigma_{\mathfrak a}\leqslant i\leqslant r_{\mathfrak a}, \sigma_{\mathfrak b}\leqslant j\leqslant r_{\mathfrak b}.$ 
Hence $A_{i,j,n}=0$ for $n>\min(b_\mathfrak a+r_{\mathfrak b}, b_{\mathfrak b}+r_{\mathfrak a}).$ Note that the conditions $b_\mathfrak a+j-i\geqslant 0,  b_{\mathfrak b}+i-j \geqslant 0$ 
implies that $A_{i,j,n}\geqslant 0$ for all $n$. Hence if for all $i,j,h_i(\mathfrak a)\geqslant 0, h_j(\mathfrak b)\geqslant 0$ then $h_n({\mathfrak a{\underline{\otimes}}\mathfrak b})\geqslant 0$ 
for all $n$,
and for $ m:=\min(b_\mathfrak a+r_{\mathfrak b}, b_{\mathfrak b}+r_{\mathfrak a})$ we have:
$$ h_{m}({\mathfrak a{\underline{\otimes}}\mathfrak b})=\underset{i,j\mid   A_{i,j,m}\ne 0}
\sum h_i(\mathfrak a)h_j(\mathfrak b)A_{i,j,m}>0. $$ 

(3) If $0\leqslant \sigma_{\mathfrak a}\leqslant r_{\mathfrak a}\leqslant b_{\mathfrak a}$ and $0\leqslant \sigma_{\mathfrak b}\leqslant r_{\mathfrak b}\leqslant b_{\mathfrak b}$ then $b_{\mathfrak a}+j\geqslant b_{\mathfrak a}\geqslant i$ and $b_{\mathfrak b}+i\geqslant b_{\mathfrak b}\geqslant j$. Therefore  $b_{\mathfrak a}+j-i\geqslant 0$ and  $b_{\mathfrak b}+i-j\geqslant 0$. Hence, the claim follows from the claim $2.$ 
\end{proof}

\begin{remark} The bounds obtained are sharp.
\end{remark}

\begin{lemma}\label{l8} The following statements are equivalent:

(1) For all $\sigma_{\mathfrak a}\leqslant i\leqslant r_{\mathfrak a}$ and $\sigma_{\mathfrak b}\leqslant j\leqslant r_{\mathfrak b}$, we have 
$$ b_{\mathfrak b}+i-j\geqslant 0 \text{\ and\ } b_{\mathfrak a}+j-i\geqslant 0.$$ 

(2) $b_{\mathfrak b}+\sigma_{\mathfrak a}-r_{\mathfrak b}\geqslant 0 \text{\ and\ } b_{\mathfrak a}+\sigma_{\mathfrak b}-r_{\mathfrak a}\geqslant 0. \ \ \ (**)$
\end{lemma}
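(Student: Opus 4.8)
The plan is to prove the equivalence of the two statements by a direct set-theoretic argument, exploiting the fact that the inequalities in (1) are linear in $i$ and $j$ and hence attain their extreme values at the corners of the rectangle $[\sigma_{\mathfrak a}, r_{\mathfrak a}] \times [\sigma_{\mathfrak b}, r_{\mathfrak b}]$. First I would observe that the implication $(1) \Rightarrow (2)$ is immediate: simply specialize (1) to the single pair $(i,j) = (\sigma_{\mathfrak a}, r_{\mathfrak b})$ to obtain $b_{\mathfrak b} + \sigma_{\mathfrak a} - r_{\mathfrak b} \geqslant 0$, and to the pair $(i,j) = (r_{\mathfrak a}, \sigma_{\mathfrak b})$ to obtain $b_{\mathfrak a} + \sigma_{\mathfrak b} - r_{\mathfrak a} \geqslant 0$. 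These are exactly the two inequalities of $(**)$.

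For the converse $(2) \Rightarrow (1)$, I would fix an arbitrary pair $(i,j)$ with $\sigma_{\mathfrak a} \leqslant i \leqslant r_{\mathfrak a}$ and $\sigma_{\mathfrak b} \leqslant j \leqslant r_{\mathfrak b}$ and verify both inequalities. For the inequality $b_{\mathfrak b} + i - j \geqslant 0$: since $i \geqslant \sigma_{\mathfrak a}$ and $j \leqslant r_{\mathfrak b}$, we have $b_{\mathfrak b} + i - j \geqslant b_{\mathfrak b} + \sigma_{\mathfrak a} - r_{\mathfrak b} \geqslant 0$ by the first half of $(**)$. Symmetrically, since $j \geqslant \sigma_{\mathfrak b}$ and $i \leqslant r_{\mathfrak a}$, we have $b_{\mathfrak a} + j - i \geqslant b_{\mathfrak a} + \sigma_{\mathfrak b} - r_{\mathfrak a} \geqslant 0$ by the second half of $(**)$. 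This establishes (1) for the chosen pair, and since the pair was arbitrary, the full statement follows.

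There is really no substantial obstacle here: the lemma is a bookkeeping statement that isolates the ``worst case'' of the hypothesis appearing in Proposition \ref{p7}, replacing a family of inequalities indexed by a rectangle of integer pairs with the two inequalities at the two relevant corners. The only point requiring a modicum of care is making sure the correct corners are chosen — one wants the corner that minimizes $b_{\mathfrak b} + i - j$ (namely $i$ as small as possible, $j$ as large as possible, i.e.\ $(\sigma_{\mathfrak a}, r_{\mathfrak b})$) and, independently, the corner that minimizes $b_{\mathfrak a} + j - i$ (namely $(r_{\mathfrak a}, \sigma_{\mathfrak b})$). These are different corners of the rectangle, but that is harmless since condition (1) must hold at every pair, so in particular at both corners; conversely, monotonicity of each linear expression in its two variables propagates the corner inequality to the whole rectangle. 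I would present the argument in the two-paragraph ``$\Rightarrow$'' / ``$\Leftarrow$'' format above, each direction being three or four lines.
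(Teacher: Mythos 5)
Your proof is correct and follows essentially the same route as the paper: specializing to the corners $(\sigma_{\mathfrak a}, r_{\mathfrak b})$ and $(r_{\mathfrak a}, \sigma_{\mathfrak b})$ for one direction, and using monotonicity of the linear expressions over the rectangle for the other. (Incidentally, the paper's own proof has its two direction labels swapped, but the content is identical to yours.)
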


\begin{proof} $(2)  \Rightarrow (1).$ Take $i=\sigma_{\mathfrak a}, j=r_{\mathfrak b}$ in the first inequality and  $j=\sigma_{\mathfrak b}, i=r_{\mathfrak a}$ in the second.

$(1)  \Rightarrow (2).$ Let $\sigma_{\mathfrak a}\leqslant i\leqslant r_{\mathfrak a}, \sigma_{\mathfrak b}\leqslant j\leqslant r_{\mathfrak b}$, then 
$$ i+b_{\mathfrak b}-j\geqslant i+b_{\mathfrak b}-r_{\mathfrak b}\geqslant \sigma_{\mathfrak a}+b_{\mathfrak b}-r_{\mathfrak b}\geqslant 0 \text{\ and \ }  j+b_{\mathfrak a}-i\geqslant i+b_{\mathfrak a}-r_{\mathfrak a}\geqslant \sigma_{\mathfrak b}+b_{\mathfrak a}-r_{\mathfrak a}\geqslant 0. $$
\end{proof}

\begin{remark} \label{r1}Suppose that $M_{\mathfrak a}, M_{\mathfrak b}$ are Cohen-Macaulay modules of dimensions $d_{\mathfrak a}=b_{\mathfrak a}+1\geq 2$,
 $d_{\mathfrak b}=b_{\mathfrak b}+1\geq 2$,
 with Hilbert-Poincaré series ${\mathfrak a}, {\mathfrak b}$. 
 Then the conditions
$b_{\mathfrak b}+\sigma_{\mathfrak a}-r_{\mathfrak b}\geqslant 0 $ and $b_{\mathfrak a}+\sigma_{\mathfrak b}-r_{\mathfrak a}\geqslant 0$ are equivalent to say that
 $M_{\mathfrak a}{\underline{\otimes}} M_{\mathfrak b}$ is a Cohen-Macaulay module by \cite{gw}[Proposition (4.2.5)].
\end{remark}

\begin{proposition} \label{p9} Let consider ${\mathfrak a}_1,\ldots, {\mathfrak a}_s$ Laurent formal series satisfying (*)
$${\mathfrak a}_i=\frac{h({\mathfrak a}_i)(t)}{(1-t)^{d_i}}, h({\mathfrak a}_i)(t)\in \fc[t,t^{-1}].  $$
We set $r_i=\deg h({\mathfrak a}_i)(t), \alpha_i=d_i-r_i, b_i=d_i-1\geqslant 0$ and 
$$ {\mathfrak a}_1{\underline{\otimes}} \ldots {\underline{\otimes}} {\mathfrak a}_s =\frac{h({\mathfrak a}_1{\underline{\otimes}} \ldots {\underline{\otimes}} {\mathfrak a}_s)(t)}{(1-t)^{b_1+\ldots +b_s+1}}.$$
Then 

(1) $ \deg(h({\mathfrak a}_1{\underline{\otimes}} \ldots {\underline{\otimes}} {\mathfrak a}_s))\leqslant (b_1+\ldots +b_s+1)-\min(\alpha_1,\ldots,\alpha_s).$

(2) If the condition (**) of Lemma \ref{l8} is fulfilled for 
$$ \{{\mathfrak a}_1,{\mathfrak a}_2\}, \{{\mathfrak a}_1{\underline{\otimes}} {\mathfrak a}_2,{\mathfrak a}_3\},\ldots, \{{\mathfrak a}_1{\underline{\otimes}} \ldots {\underline{\otimes}} {\mathfrak a}_{s-1},{\mathfrak a}_s\} $$
and $h_k({\mathfrak a}_i)\geqslant 0$ for all $i$ and $k$, then 
$$ \deg(h({\mathfrak a}_1{\underline{\otimes}} \ldots {\underline{\otimes}} {\mathfrak a}_s))=(b_1+\ldots +b_s+1)-\max(\alpha_1,\ldots,\alpha_s). $$

(3) If for all $i=1,...,s$, $0\leq \sigma_i  r_i<d_i,$  then the condition  (**) of Lemma \ref{l8} is fulfilled for 
$$ \{{\mathfrak a}_1,{\mathfrak a}_2\}, \{{\mathfrak a}_1{\underline{\otimes}} {\mathfrak a}_2,{\mathfrak a}_3\},\ldots, \{{\mathfrak a}_1{\underline{\otimes}} \ldots {\underline{\otimes}} {\mathfrak a}_{s-1},{\mathfrak a}_s\} .$$
\end{proposition}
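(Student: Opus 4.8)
The plan is to prove Proposition~\ref{p9} by reducing each claim to the two-factor results already established, namely Lemma~\ref{l6}, Proposition~\ref{p7} and Lemma~\ref{l8}, and then proceeding by induction on $s$. Throughout I will use the basic fact (Lemma~\ref{l1}) that $d_{\mathfrak{a}_1\underline{\otimes}\cdots\underline{\otimes}\mathfrak{a}_s}=b_1+\cdots+b_s+1$ whenever all $d_i\geq 1$, so that after forming the first $k$ Segre products the ``$b$-parameter'' of $\mathfrak{a}_1\underline{\otimes}\cdots\underline{\otimes}\mathfrak{a}_k$ is exactly $b_1+\cdots+b_k$. I would also record at the outset that $\deg h(\mathfrak{a}_i)(t)=r_i$ translates to $\alpha_i=d_i-r_i$, the quantity appearing in the statement, so the degree bounds from Proposition~\ref{p7} can be rewritten directly in terms of the $\alpha_i$.

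For part~(1), I would argue by induction on $s$. The base case $s=1$ is trivial. For the inductive step write $\mathfrak{b}=\mathfrak{a}_1\underline{\otimes}\cdots\underline{\otimes}\mathfrak{a}_{s-1}$, which by induction has denominator exponent $d_{\mathfrak b}=b_1+\cdots+b_{s-1}+1$, hence $b_{\mathfrak b}=b_1+\cdots+b_{s-1}$, and $r_{\mathfrak b}=\deg h(\mathfrak b)\leq (b_1+\cdots+b_{s-1}+1)-\min(\alpha_1,\ldots,\alpha_{s-1})$. Applying Proposition~\ref{p7}(1) to the pair $\{\mathfrak b,\mathfrak{a}_s\}$ gives
$$r_{\mathfrak b\underline{\otimes}\mathfrak{a}_s}\leq\max\bigl(b_{\mathfrak b}+r_s,\ b_s+r_{\mathfrak b}\bigr).$$
Substituting the induction hypothesis and the values $b_{\mathfrak b}=b_1+\cdots+b_{s-1}$, $r_s=d_s-\alpha_s$, $b_s=d_s-1$, both arguments of the $\max$ are $\leq(b_1+\cdots+b_s+1)-\min(\alpha_1,\ldots,\alpha_s)$ — the first because $b_{\mathfrak b}+r_s=(b_1+\cdots+b_{s-1})+(d_s-\alpha_s)\leq(b_1+\cdots+b_s+1)-\alpha_s$, the second similarly using that $\min(\alpha_1,\ldots,\alpha_{s-1})\geq\min(\alpha_1,\ldots,\alpha_s)$. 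I expect this bookkeeping with the $\min$ over the running prefix versus the full list to be the only genuinely delicate point, but it is straightforward once one notes $\min$ is monotone under shrinking the index set.

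For part~(2), I would run the same induction, now using the \emph{equality} half of Proposition~\ref{p7}: hypothesis~(**) on the pair $\{\mathfrak{a}_1\underline{\otimes}\cdots\underline{\otimes}\mathfrak{a}_{k-1},\mathfrak{a}_k\}$ — together with the nonnegativity $h_j\geq 0$, which propagates to the Segre products by the positivity statements in Theorem~\ref{t1} and Proposition~\ref{p7}(2) (so the inductive hypothesis ``all $h$-coefficients are $\geq 0$'' is maintained) — gives $r_{\mathfrak{a}_1\underline{\otimes}\cdots\underline{\otimes}\mathfrak{a}_k}=\min(b_{\mathfrak b}+r_k,\ b_k+r_{\mathfrak b})$ where $\mathfrak b=\mathfrak{a}_1\underline{\otimes}\cdots\underline{\otimes}\mathfrak{a}_{k-1}$. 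Unwinding the recursion $r\mapsto\min(b_{\mathfrak b}+r_k, b_k+r_{\mathfrak b})$ with $b_{\mathfrak b}=b_1+\cdots+b_{k-1}$ collapses, after a short computation, to $r_{\mathfrak{a}_1\underline{\otimes}\cdots\underline{\otimes}\mathfrak{a}_s}=(b_1+\cdots+b_s+1)-\max(\alpha_1,\ldots,\alpha_s)$; the $\min$ over the pairwise bounds turns into a $\max$ over the $\alpha_i$ exactly because each $b_k+r_{\mathfrak b}=(b_1+\cdots+b_k+1)-\alpha_{\mathfrak b}$-type term competes and the smallest $r$ corresponds to the largest $\alpha$. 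I would present this as: prove by induction that $r_{\mathfrak{a}_1\underline{\otimes}\cdots\underline{\otimes}\mathfrak{a}_k}=(b_1+\cdots+b_k+1)-\max(\alpha_1,\ldots,\alpha_k)$, the inductive step being a one-line application of Proposition~\ref{p7}(2) plus the identity $\min(A-\alpha_k,\,A-\max(\alpha_1,\ldots,\alpha_{k-1}))=A-\max(\alpha_1,\ldots,\alpha_k)$ with $A=b_1+\cdots+b_k+1$.

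For part~(3), the claim is that the hypotheses $0\leq\sigma_i\leq r_i<d_i$ for every $i$ force condition~(**) to hold along the whole chain of pairs. Here I would again induct, tracking the invariant that the partial Segre product $\mathfrak b=\mathfrak{a}_1\underline{\otimes}\cdots\underline{\otimes}\mathfrak{a}_{k}$ again satisfies $0\leq\sigma_{\mathfrak b}\leq r_{\mathfrak b}<d_{\mathfrak b}$: the lower bound $\sigma_{\mathfrak b}\geq 0$ follows from $\sigma_{\mathfrak b}=\max(\sigma_i)\geq 0$ type considerations, and the upper bound $r_{\mathfrak b}\leq d_{\mathfrak b}-1$ comes from part~(1) since $\min(\alpha_1,\ldots,\alpha_k)\geq 1$ under the assumption $r_i<d_i$. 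Once the pair $\{\mathfrak b,\mathfrak{a}_{k+1}\}$ satisfies $0\leq\sigma\leq r\leq b$ in each coordinate, Proposition~\ref{p7}(3) (equivalently Proposition~\ref{p7}(2) via Lemma~\ref{l8}) applies, which is precisely the assertion that~(**) holds for that pair. So part~(3) is really just the observation that the ``box condition'' $0\leq\sigma\leq r\leq b$ is closed under Segre products, which I verify by the degree bound of part~(1). The main obstacle overall is not conceptual but notational: keeping the indices, the running sums $b_1+\cdots+b_k$, and the distinction between $\min$ over a prefix and $\min$ over the whole list perfectly aligned through the induction; I would therefore state a single inductive claim subsuming (1)–(3) for prefixes and deduce all three parts at $k=s$.
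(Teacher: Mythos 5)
Your proposal is correct and follows essentially the same route as the paper: induction on $s$, treating the prefix $\mathfrak{a}_1\underline{\otimes}\cdots\underline{\otimes}\mathfrak{a}_{s-1}$ as a single series with $b$-parameter $b_1+\cdots+b_{s-1}$ and invoking the two-factor bounds of Proposition~\ref{p7} (the $\max$ bound for part (1), the $\min$ equality under (**) and nonnegativity for part (2), and the box condition $0\leq\sigma\leq r\leq b$ for part (3)). Your part (3) is in fact slightly more careful than the paper's one-line justification, since you explicitly verify via the part (1) degree bound that the box condition is preserved by the partial Segre products along the chain.
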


\begin{proof} (1) Note that 
$$ \max(b_1+r_2, b_2+r_1)=\max(b_1+b_2+1-\alpha_2, b_1+b_2+1-\alpha_1)=b_1+b_2+1-\min(\alpha_1,\alpha_2). $$
Now suppose $s\geqslant 3$, we prove the claim by induction. Assume that (1) is true  for the case $s-1:$
 $$ \deg(h({\mathfrak a}_1{\underline{\otimes}} \ldots {\underline{\otimes}} {\mathfrak a}_{s-1}))\leqslant (b_1+\ldots +b_{s-1}+1)-\min(\alpha_1,\ldots,\alpha_{s-1}).$$
Then, by the Proposition \ref{p7}
$$ \begin{aligned} \deg(&h({\mathfrak a}_1{\underline{\otimes}} \ldots {\underline{\otimes}} {\mathfrak a}_{s-1}) {\underline{\otimes}} {\mathfrak a}_s))\leqslant \max(b_1+\ldots +b_{s-1}+r_s, b_s+\deg({\mathfrak a}_1{\underline{\otimes}} \ldots{\underline{\otimes}} {\mathfrak a}_{s-1})\\
& \leqslant  \max (b_1+\ldots +b_{s-1}+b_s+1-\alpha_s, b_1+\ldots +b_s+1-\min(\alpha_1,\ldots,\alpha_{s-1})\\
&=b_1+\ldots +b_s+1+\max(-\alpha_s,-\min(\alpha_1,\ldots, \alpha_{s-1}))\\
&=b_1+\ldots +b_s+1-\min(\alpha_s, \min(\alpha_1,\ldots, \alpha_{s-1}))   \\
&=b_1+\ldots +b_s+1-\min(\alpha_s, \min(\alpha_1,\ldots, \alpha_{s}))  \end{aligned} $$

(2) Since condition (**) is fulfilled, we can apply Proposition \ref{p7} and we get: 
$$ \begin{aligned} \deg({\mathfrak a}_1{\underline{\otimes}} {\mathfrak a}_2)&\leqslant \min(b_1+r_2, b_2+r_1)=\min(b_1+b_2+1-\alpha_2, b_2+b_1+1-\alpha_1)\\
&=b_1+b_2+1+\min(-\alpha_1,-\alpha_2)=b_1+b_2+1-\max(\alpha_1, \alpha_2)  \end{aligned} $$
and we have equality if $h_k({\mathfrak a}_i)\geqslant 0,$ for all $i,k.$ By induction hypothesis we assume that 

$$ \deg({\mathfrak a}_1{\underline{\otimes}} \ldots {\underline{\otimes}} {\mathfrak a}_{s-1})\leqslant (b_1+\ldots +b_{s-1}+1-\max(\alpha_1,\ldots, \alpha_{s-1}) \ \ (4) $$ and  we have equality 
if $h_k({\mathfrak a}_i)\geqslant 0$, for all $i,k$. Moreover by Proposition \ref{p7},the coefficients $h_k({\mathfrak a}_1{\underline{\otimes}} \ldots {\underline{\otimes}} {\mathfrak a}_{s-1})$  are $\geqslant 0.$
 On the other hand, condition (**) is fulfilled, so we can apply Proposition \ref{p7}, we have:

$$ \deg({\mathfrak a}_1{\underline{\otimes}} \ldots {\underline{\otimes}} {\mathfrak a}_{s-1}) {\underline{\otimes}} {\mathfrak a}_s))\leqslant \min(b_1+\ldots +b_{s-1}+r_s, b_s+\deg({\mathfrak a}_1{\underline{\otimes}} \ldots{\underline{\otimes}} {\mathfrak a}_{s-1}) \ \ (5), $$
where we have the equality if $h_k({\mathfrak a}_s)\geqslant 0$ and $h_k({\mathfrak a}_1{\underline{\otimes}} \ldots {\underline{\otimes}} {\mathfrak a}_{s-1})\geqslant 0,$ which is true since by hypothesis $h_k({\mathfrak a}_i)\geqslant 0$ for all $i,k$.

Using (4) in (5) we get
$$ \begin{aligned} \deg(h({\mathfrak a}_1{\underline{\otimes}}& \ldots {\underline{\otimes}} {\mathfrak a}_s))\leqslant\\ &\min(b_1+\ldots +b_{s-1}+1-\alpha_s, b_s+b_1+\ldots+b_{s-1}+1-\max(\alpha_1, \alpha_{s-1}))\\
&=b_1+\ldots +b_s+1+\min(-\alpha_s,-\max(\alpha_1,\ldots, \alpha_{s-1}))\\
&=b_1+\ldots +b_s+1-\max(\alpha_s, \max(\alpha_1,\ldots, \alpha_{s-1}))   \\
&=b_1+\ldots +b_s+1-\max(\alpha_s, \min(\alpha_1,\ldots, \alpha_{s}))  \end{aligned} $$
and we have the equality if $h_k({\mathfrak a}_i)\geqslant 0$ for all $i,k.$

(3) The proof is immediate from Proposition \ref{p7}.
\end{proof}
\section{$h-$ vector of the Segre product of $s$ power series}
The proof of the following theorem is direct from \ref{t2} by using induction.
\begin{theorem} \label{thl6} With the notations of Proposition \ref{p9}. 

For 
$\sigma_{s}\leq   i_{s}\leq  b_{1}+...+b_{s}-\min \{\alpha _{1},...,\alpha _{s}\}$ we have
$$h_{i_s}({\mathfrak a_1}{ {\underline{\otimes}} } ... { {\underline{\otimes}} }{\mathfrak a_s})= 
 \sum_{(i_1,i_2,...,i_{s-1 },l_2,...,l_s)\in \Delta }h_{i_{1 } }({\mathfrak a_1})h_{l_{2 } }({\mathfrak a_2})...h_{l_{s } }
({\mathfrak a_s})A_{i_1,l_2,i_2 }...A_{i_{s-1 },l_s,i_s }$$
where $$\forall k=2,...,s; \  A_{i_{k-1 },l_k,i_k }={ {b_{1}+...+b_{{k-1 }}+l_k-i_{k-1 }}\choose 
{i_k-i_{k-1 }}}{ {b_{k}+i_{k-1 }-l_k}\choose {i_k-l_k}},$$and  $\Delta $ is defined by 
: for any $\tau =2,...,s$, 
$$\sigma_\tau  \leq    l_{\tau} \leq b_{\tau}+1- \alpha _{\tau} ,\  \  
 \sigma_{\tau -1}\leq   i_{\tau-1}\leq \min \{ b_{1}+...+b_{\tau-1}-\min \{\alpha _{1},...,\alpha _{\tau-1}\}, i_{\tau}  \}.$$

\end{theorem}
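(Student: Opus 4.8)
The plan is to induct on $s$, peeling off one factor at a time and invoking Theorem~\ref{t2} at each step. For $s=1$ the assertion is the tautology $h_{i_1}(\mathfrak a_1)=h_{i_1}(\mathfrak a_1)$ (the products of the $A$'s and of the $h_{l_k}(\mathfrak a_k)$, $k\ge 2$, being empty), so the real base case is $s=2$, which is just Theorem~\ref{t2} applied to $\mathfrak a_1,\mathfrak a_2$: I would note that $h_i(\mathfrak a_1)=0$ outside $[\sigma_1,r_1]$ and $h_j(\mathfrak a_2)=0$ outside $[\sigma_2,r_2]$ to truncate the doubly infinite sum to a finite one, observe that $b_{\mathfrak a_1}=b_1$ and $b_{\mathfrak a_2}=b_2$ so that the binomial factor there is exactly $A_{i_1,l_2,i_2}$, and use the convention of the Remark on binomial coefficients — which makes $\binom{\cdot}{i_2-i_1}$ and $\binom{\cdot}{i_2-l_2}$ vanish as soon as $i_1>i_2$ or $l_2>i_2$ — to justify the cutoffs $i_1\le i_2$, $l_2\le i_2$. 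Capping $i_1$ further at the a priori bound $r_1=b_1+1-\alpha_1$ (Proposition~\ref{p9}(1) with one factor) and $l_2$ at $r_2=b_2+1-\alpha_2$ discards only zero summands, and what remains is precisely the sum over $\Delta$ in the case $s=2$.

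For the inductive step I would set $\mathfrak c=\mathfrak a_1{\underline{\otimes}}\cdots{\underline{\otimes}}\mathfrak a_{s-1}$ and write $\mathfrak a_1{\underline{\otimes}}\cdots{\underline{\otimes}}\mathfrak a_s=\mathfrak c\,{\underline{\otimes}}\,\mathfrak a_s$. By Proposition~\ref{p9} the denominator of $\mathfrak c$ is $(1-t)^{b_1+\cdots+b_{s-1}+1}$, hence $b_{\mathfrak c}=b_1+\cdots+b_{s-1}$, so Theorem~\ref{t2} gives for all $i_s\in\fz$
$$ h_{i_s}(\mathfrak a_1{\underline{\otimes}}\cdots{\underline{\otimes}}\mathfrak a_s)=\sum_{i_{s-1}}\sum_{l_s} h_{i_{s-1}}(\mathfrak c)\,h_{l_s}(\mathfrak a_s)\,\binom{b_1+\cdots+b_{s-1}+l_s-i_{s-1}}{i_s-i_{s-1}}\binom{b_s+i_{s-1}-l_s}{i_s-l_s}, $$
i.e. $\sum_{i_{s-1},l_s}h_{i_{s-1}}(\mathfrak c)\,h_{l_s}(\mathfrak a_s)\,A_{i_{s-1},l_s,i_s}$. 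In this sum $h_{l_s}(\mathfrak a_s)\ne 0$ forces $\sigma_s\le l_s\le r_s=b_s+1-\alpha_s$, the binomial factors force $i_{s-1}\le i_s$ and $l_s\le i_s$, and $h_{i_{s-1}}(\mathfrak c)\ne 0$ forces $i_{s-1}\le r_{\mathfrak c}$, which by Proposition~\ref{p9}(1) is at most $(b_1+\cdots+b_{s-1}+1)-\min\{\alpha_1,\ldots,\alpha_{s-1}\}$; together these are exactly the inequalities that $\Delta$ imposes for the index $\tau=s$.

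Finally I would expand $h_{i_{s-1}}(\mathfrak c)$ by the induction hypothesis. Since that formula is an identity holding on the whole range $\sigma_{s-1}\le i_{s-1}\le (b_1+\cdots+b_{s-1}+1)-\min\{\alpha_1,\ldots,\alpha_{s-1}\}$ — both sides being $0$ once $i_{s-1}>r_{\mathfrak c}$ — I can substitute it into every surviving term, interchange the (finite) sums, and carry the extra factor $h_{l_s}(\mathfrak a_s)A_{i_{s-1},l_s,i_s}$ along; the product of the $A$'s becomes $A_{i_1,l_2,i_2}\cdots A_{i_{s-1},l_s,i_s}$, and the total index set is the one for $s-1$ augmented by the $\tau=s$ constraints just identified, that is, the set $\Delta$ of the statement. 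The conceptual content is therefore entirely in Theorem~\ref{t2}; I expect the only delicate part to be the range bookkeeping — in particular checking that replacing the exact cutoff $r_{\mathfrak a_1{\underline{\otimes}}\cdots{\underline{\otimes}}\mathfrak a_{\tau-1}}$ on each $i_{\tau-1}$ by the a priori bound of Proposition~\ref{p9}(1) only ever adds vanishing summands, and reconciling the several admissible choices of the lower cutoff $\sigma$ in Theorem~\ref{t2} with the $\sigma_\tau$'s appearing in the description of $\Delta$.
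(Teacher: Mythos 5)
Your proposal is correct and is essentially the paper's own argument: the paper's entire proof of this theorem is the single sentence that it ``is direct from Theorem \ref{t2} by using induction'', and your write-up supplies exactly that induction on $s$ via $\mathfrak c=\mathfrak a_1{\underline{\otimes}}\cdots{\underline{\otimes}}\mathfrak a_{s-1}$ together with the range bookkeeping the paper omits. The only caveat is an off-by-one that lives in the paper's statement rather than in your argument: Proposition \ref{p9}(1) bounds $r_{\mathfrak c}$ by $(b_1+\cdots+b_{s-1}+1)-\min\{\alpha_1,\ldots,\alpha_{s-1}\}$, whereas $\Delta$ caps $i_{s-1}$ at $b_1+\cdots+b_{s-1}-\min\{\alpha_1,\ldots,\alpha_{s-1}\}$, so your truncation at $r_{\mathfrak c}$ is the safe one.
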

There is an important corollary that will be used in \cite{Mo-Seg} to prove the conjecture by Simon Newcomb:
\begin{theorem} \label{thl7} For $j=1,...,n,$  let $S_j$ be a polynomial ring over a field $K$ in $b_j +1$ variables, ${\mathfrak a_j}$  the 
 Hilbert-Poincare series of $S_j$, that is   ${\mathfrak a_j}= \displaystyle \frac{1}{(1-t)^{b_j+1}}$ then: 
 
For $k =0,..., b_{1}+...+b_{n}-\max \{b_{1},...,b_{n}\}$, we have 
$$A({[{\bf b}]},k)=\sum_{(i_2,...,i_{n-1})\in \Delta } A_{i_2}  A_{i_2,i_3}A_{i_3,i_4}... A_{i_{n-1},i_{n}},$$
where $$i_{n}:=k; A_{i_2}={ {b_1 }\choose{i_2}}{ {b_2}\choose {i_2}}; \  \forall  s= 2,...,n-1,\  A_{i_s,i_{s+1}}={ {b_1+...+b_{s}-i_{s}}\choose{i_{s+1}-i_{s}}}{ {b_{s+1}+i_{s}}\choose { i_{s+1}}}\geq 0$$ 
 and $\Delta $ is defined by : for any $\tau =2,...,n-1$ $$0\leq    i_{\tau }\leq \min \{ b_{1}+...+b_{\tau }-\max \{b_{1},...,b_{\tau }\}, i_{\tau +1}  \}$$
\end{theorem}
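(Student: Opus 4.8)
The plan is to specialize Theorem \ref{thl6} to the case where each $\mathfrak{a}_j$ is the Hilbert–Poincar\'e series $1/(1-t)^{b_j+1}$ of a polynomial ring. In this situation one has $h(\mathfrak{a}_j)(t)=1$, so $\sigma_j=r_j=0$, hence $\alpha_j=d_j-r_j=b_j+1$, and the only nonzero coefficient is $h_0(\mathfrak{a}_j)=1$. Consequently in the sum of Theorem \ref{thl6} the index $l_\tau$ is forced to equal $\sigma_\tau=0$ for every $\tau$, so each factor $h_{l_\tau}(\mathfrak{a}_\tau)=1$ disappears and the constraint $0\le l_\tau\le b_\tau+1-\alpha_\tau=0$ is automatically consistent. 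What remains is a sum over the internal indices $i_2,\dots,i_{n-1}$ only, with $i_1=0$ also forced (since $\sigma_1\le i_1\le b_1+\dots+b_1-\min\{\alpha_1\}=0$ in the first range, or more simply $i_1=l_1=0$).

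Next I would substitute $l_k=0$ and $i_1=0$ into the weight $A_{i_{k-1},l_k,i_k}$. For $k=2$ this gives
$A_{0,0,i_2}=\binom{b_1}{i_2}\binom{b_2}{i_2}$, which matches the stated $A_{i_2}$; for $k=s+1\ge 3$ it gives
$A_{i_s,0,i_{s+1}}=\binom{b_1+\dots+b_s-i_s}{i_{s+1}-i_s}\binom{b_{s+1}+i_s}{i_{s+1}}$, matching the stated $A_{i_s,i_{s+1}}$. One should also check nonnegativity of these binomials: since $0\le i_s\le b_1+\dots+b_s$ along $\Delta$, both upper indices are nonnegative integers, so the claimed $\ge 0$ holds. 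The index set $\Delta$ in Theorem \ref{thl6}, after setting $\alpha_\tau=b_\tau+1$ and $l_\tau=0$, collapses to the condition $0\le i_\tau\le\min\{b_1+\dots+b_\tau-\min\{\alpha_1,\dots,\alpha_\tau\},\,i_{\tau+1}\}$; and $\min\{\alpha_1,\dots,\alpha_\tau\}=\min\{b_1+1,\dots,b_\tau+1\}=\min\{b_1,\dots,b_\tau\}+1$, so $b_1+\dots+b_\tau-\min\{\alpha_1,\dots,\alpha_\tau\}=b_1+\dots+b_\tau-\max\{b_1,\dots,b_\tau\}-\,$wait, one must be careful here: it is $-\min$, not $-\max$. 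Let me recheck: in Theorem \ref{thl6} the upper bound on $h_{i_s}$ is $b_1+\dots+b_s-\min\{\alpha_1,\dots,\alpha_s\}$, and with $\alpha_j=b_j+1$ this equals $b_1+\dots+b_s-(\min\{b_j\}+1)=b_1+\dots+b_s-\min\{b_1,\dots,b_s\}-1$. So the top degree range should be reconciled with the claimed $b_1+\dots+b_n-\max\{b_1,\dots,b_n\}$ for the Eulerian-type number $A([\mathbf b],k)$; the discrepancy between $\min$ and $\max$ indicates that the object $A([\mathbf b],k)$ is indexed by $k=i_n$ read off from the \emph{top}, i.e. via the symmetry of the $h$-vector of a Cohen--Macaulay ring (Poincar\'e duality / Lemma \ref{l1} with the reversal $h_n\leftrightarrow h_{r-n}$), so that degree $k$ from one end corresponds to degree $b_1+\dots+b_n+1-\,(\text{something})-k$ from the other. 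I would make this reflection explicit as the final bookkeeping step.

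The main obstacle I expect is precisely this last reconciliation: matching the range $k=0,\dots,b_1+\dots+b_n-\max\{b_1,\dots,b_n\}$ and the definition of $A([\mathbf b],k)$ (which is presumably a multi-set Eulerian / Simon Newcomb number) against the ``from the top'' indexing in Theorem \ref{thl6}. Everything else is a direct substitution $l_\tau=0$, $i_1=0$, $\alpha_\tau=b_\tau+1$ together with the observation that the degenerate constraints on $\Delta$ are vacuously satisfied; the nonnegativity of the $A_{i_s,i_{s+1}}$ is immediate from the range of $i_s$ on $\Delta$. So the proof reduces to: (i) quote Theorem \ref{thl6}; (ii) specialize all data to polynomial rings; (iii) observe the collapse of the $l$-indices and of $i_1$; (iv) rewrite the surviving weights as $A_{i_2}$ and $A_{i_s,i_{s+1}}$; (v) identify $A([\mathbf b],k)$ with the resulting coefficient of the Segre product's $h$-vector, using the palindromic symmetry of that $h$-vector to pass from the ``$\min$'' indexing to the stated ``$\max$'' indexing.
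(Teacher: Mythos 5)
Your overall strategy --- specialize Theorem \ref{thl6} to $\mathfrak{a}_j=1/(1-t)^{b_j+1}$, so that $h(\mathfrak{a}_j)(t)=1$, $\sigma_j=r_j=0$, $\alpha_j=b_j+1$, whence every $l_\tau$ and $i_1$ are forced to be $0$, and the surviving weights are $A_{0,0,i_2}=\binom{b_1}{i_2}\binom{b_2}{i_2}$ and $A_{i_s,0,i_{s+1}}=\binom{b_1+\cdots+b_s-i_s}{i_{s+1}-i_s}\binom{b_{s+1}+i_s}{i_{s+1}}$ --- is exactly the route the paper intends: it presents the theorem as a direct corollary of Theorem \ref{thl6} with no further argument. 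Your steps (i)--(iv) are correct, including the nonnegativity of the weights.

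The flaw is in your step (v). The passage from the $\min\{\alpha_j\}$-type bounds of Theorem \ref{thl6} to the $\max\{b_j\}$-type bounds of Theorem \ref{thl7} is \emph{not} effected by palindromic symmetry of the $h$-vector: the $h$-vector of a Segre product of polynomial rings is not palindromic in general (for $b_1=1$, $b_2=2$ one computes $h=(1,2)$), so no reversal $h_n\leftrightarrow h_{r-n}$ is available, and $k$ in $A([\mathbf{b}],k)$ is indexed from the bottom, not ``from the top''. The correct reconciliation is Proposition \ref{p9}: part (1) gives only the a priori bound $\deg h\leqslant b_1+\cdots+b_n+1-\min\{\alpha_j\}$, which is what the range in Theorem \ref{thl6} reflects (up to an off-by-one in its statement; the formula itself, being an iteration of Theorem \ref{t2}, is valid for every $i_s\in\mathbb{Z}$), while parts (2)--(3) apply here because $0=\sigma_j\leqslant r_j=0<d_j$ and all $h_k(\mathfrak{a}_j)\geqslant 0$, and they give the \emph{exact} degree $b_1+\cdots+b_n+1-\max\{\alpha_j\}=b_1+\cdots+b_n-\max\{b_1,\ldots,b_n\}$. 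Thus the stated range of $k$ is precisely the support of the $h$-vector. Likewise each internal cap $b_1+\cdots+b_\tau-\max\{b_1,\ldots,b_\tau\}$ is the exact degree of $h(\mathfrak{a}_1{\underline{\otimes}}\cdots{\underline{\otimes}}\mathfrak{a}_\tau)$, and the terms of Theorem \ref{thl6} with some $i_\tau$ beyond this cap contribute nothing, since for fixed $i_\tau,\ldots,i_n$ their sum over $i_2,\ldots,i_{\tau-1}$ equals $h_{i_\tau}(\mathfrak{a}_1{\underline{\otimes}}\cdots{\underline{\otimes}}\mathfrak{a}_\tau)\cdot A_{i_\tau,0,i_{\tau+1}}\cdots A_{i_{n-1},0,i_n}=0$; so trimming $\Delta$ to the stated ranges loses nothing. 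Replace your symmetry argument by this appeal to Proposition \ref{p9}(2)--(3) and the proof is complete, granting (as the paper does, via \cite{Mo-Seg}) that $A([\mathbf{b}],k)$ denotes the $k$-th entry of this $h$-vector.
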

Applying Proposition \ref{p9} to modules we get the following Theorem (note that this theorem can be proved easily by using \cite{gw},
 but our purpose is to prove it by using only elementary tools):
\begin{theorem} \label{main1} Let $S_1,\ldots, S_s$ be graded polynomial rings on disjoints  sets of variables. 
For all $i=1,\ldots,s,$ let $M_i$ be a graded  finitely generated $S_i$-Cohen-Macaulay module. We assume that $M_i=\oplus_{l\geqslant 0}M_{i,l} $ as $S_i$-module. 
Let $d_i=\dim M_i, b_i=d_i-1\geq 0$, $\alpha_i=d_i-\reg(M_i),$ where $\reg(M_i)$ is the Castelnuovo-Mumford regularity of $M_i.$ If $\reg(M_i)<d_i,$ for all $i=1,\ldots,s$ then 

$(1)$ $M_1{\underline{\otimes}} \ldots {\underline{\otimes}} M_s$ is a Cohen-Macaulay $S_1{\underline{\otimes}} \ldots {\underline{\otimes}} S_s$-module.

$(2)$ $\reg(M_1{\underline{\otimes}} \ldots {\underline{\otimes}} M_s)=(b_1+\ldots+b_s+1)-\max\{\alpha_1,\ldots\alpha_s\}.$

$(3)$ For $n_i\in \fn,$ let $M_i^{<n_i>}$ be the $n_i$-Veronese transform of $M_i$, then 
$$\reg (M_1^{<n_1>}{\underline{\otimes}} \ldots {\underline{\otimes}} M_s^{<n_s>})=(b_1+\ldots+b_s+1)-\max\{\lceil\frac{\alpha_1}{n_1}\rceil,\ldots,\lceil\frac{\alpha_s}{n_s}\rceil\}.$$
\end{theorem} 
\begin{proof}We have that for all $i=1,...,s$, $0\leq \sigma (M_i),  \reg(M_i)<d_i,$  then statement (3) of Proposition \ref{p9} implies that 
is a Cohen-Macaulay module by the Remark \ref{r1} and \cite{gw}[Proposition (4.2.5)]. The second claim follows immediately from  Proposition \ref{p9}.

The third claim follows from the second and the fact that for all $i=1,...,s$, $\reg (M_i^{<n_i>})= d_i-\lceil\frac{\alpha_i}{n_i}\rceil$, proved in\cite{md}[Theorem 4.7].
\end{proof}
If one of the modules has dimension 0, then we get the following Corollary of Theorem \ref{t1}.
\begin{theorem}  Let $S_1,\ldots, S_s$ be graded polynomial rings on disjoints of set of variables. 
For all $i=1,\ldots,s,$ let $M_i$ be a graded  finitely generated $S_i$-Cohen-Macaulay module of dimension $d_i=\dim M_i. $ We assume that $M_i=\oplus_{l\in \fz}M_{i,l} $ as $S_i$-module, 
and there is an index $k$ such that $d_k=\dim M_k=0$.
Then 
$M_1{\underline{\otimes}} \ldots {\underline{\otimes}} M_s$ is a 0-dimensional Cohen-Macaulay $S_1{\underline{\otimes}} \ldots {\underline{\otimes}} S_s$-module and 
$\reg(M_1{\underline{\otimes}} \ldots {\underline{\otimes}} M_s)=\underset{k\mid \dim M_k=0}{\min} \reg(M_k)$.
\end{theorem}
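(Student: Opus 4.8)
The plan is to read everything off Hilbert series and to lean on Theorem~\ref{t1}. Write $\mathfrak a_i=\sum_l(\dim_K M_{i,l})t^l=h(\mathfrak a_i)(t)/(1-t)^{d_i}$ for the Hilbert series of $M_i$; since each $S_i$ is a polynomial ring and $M_i$ is Cohen--Macaulay, cutting by a linear system of parameters shows that the $h$-vector $h(\mathfrak a_i)(t)$ has non-negative coefficients. Degree by degree the Hilbert function of the Segre product module $M_1{\underline{\otimes}}\cdots{\underline{\otimes}}M_s$ is $l\mapsto\prod_i\dim_K M_{i,l}$, so its Hilbert series is exactly $\mathfrak a_1{\underline{\otimes}}\cdots{\underline{\otimes}}\mathfrak a_s$.

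First I would dispose of the Cohen--Macaulayness. Since $d_k=0$, $\mathfrak a_k$ is a Laurent polynomial, and by the lemma asserting that a Segre transform again satisfies $(*)$ (with $d_{\mathfrak a{\underline{\otimes}}\mathfrak b}=0$ as soon as $d_{\mathfrak a}=0$ or $d_{\mathfrak b}=0$) an immediate induction gives $d_{\mathfrak a_1{\underline{\otimes}}\cdots{\underline{\otimes}}\mathfrak a_s}=0$. Hence the Hilbert series of $M_1{\underline{\otimes}}\cdots{\underline{\otimes}}M_s$ has no pole at $t=1$, so this module has Krull dimension $0$; and a nonzero graded module of dimension $0$ has depth $0$, hence is Cohen--Macaulay.

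For the regularity, recall that a nonzero finite length graded module $N$ satisfies $\reg N=\max\{l:N_l\neq0\}$, which is the degree of its Hilbert polynomial $\sum_l(\dim N_l)t^l$ (immediate since $H^0_{\m}(N)=N$ and $H^i_{\m}(N)=0$ for $i>0$, although one may also argue from a minimal free resolution). For $k$ with $d_k=0$ this reads $\reg M_k=\deg\mathfrak a_k$, so it remains to prove
$$\deg\bigl(\mathfrak a_1{\underline{\otimes}}\cdots{\underline{\otimes}}\mathfrak a_s\bigr)=r:=\min\{\deg\mathfrak a_k:\ d_k=0\}.$$
Choose $k_0$ with $d_{k_0}=0$ and $\deg\mathfrak a_{k_0}=r$, and form the Segre product by first Segre-multiplying $\mathfrak a_{k_0}$ against the factors $\mathfrak a_i$ with $d_i>0$, one at a time, and only afterwards against the remaining factors with $d_k=0$. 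Each step of the first block is an application of Theorem~\ref{t1} with the running product (pole order $0$) as ``$\mathfrak a$'' and $\mathfrak a_i$ (with $d_i>0$ and $h_j(\mathfrak a_i)\geq0$) as ``$\mathfrak b$'': using that every intermediate Segre product is nonzero (else the whole product would vanish), such a step preserves non-negativity of all coefficients, preserves the degree, and keeps the coefficient in degree $r$ strictly positive. The outcome is a Laurent polynomial $\mathfrak c$ with $\mathfrak c_l\geq0$, $\deg\mathfrak c=r$ and $\mathfrak c_r>0$. For each remaining $k$ with $d_k=0$ we have $\deg\mathfrak a_k\geq r$, and $(\mathfrak c{\underline{\otimes}}\mathfrak a_k)_l=\mathfrak c_l(\mathfrak a_k)_l$ vanishes for $l>r$; since the full Segre product is nonzero one checks that $(\mathfrak a_k)_r\neq0$, so the degree stays equal to $r$, yielding $\reg(M_1{\underline{\otimes}}\cdots{\underline{\otimes}}M_s)=r$.

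The bookkeeping with pole orders and the iteration of Theorem~\ref{t1} are routine; the delicate point, and the real content, is that the degree does not drop, i.e.\ that the coefficient in degree $r$ of the full Segre transform is nonzero. For the positive-dimensional factors this is precisely the equality clause of Theorem~\ref{t1}, whose proof exploits the non-negativity of the $h$-vector of a Cohen--Macaulay module so that the sum $\sum_j h_j(\mathfrak b)\binom{b_{\mathfrak b}+n-j}{n-j}$ is strictly positive. For the extra $0$-dimensional factors it reduces to checking that the support of the Hilbert function of $M_k$ reaches degree $r$, which is forced once one uses both that $r$ is minimal among the $\deg\mathfrak a_k$ with $d_k=0$ and that $\mathfrak a_1{\underline{\otimes}}\cdots{\underline{\otimes}}\mathfrak a_s\neq0$; this is where I expect the main care to be needed.
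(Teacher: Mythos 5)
The paper gives no argument for this theorem beyond the remark that it is a corollary of Theorem \ref{t1}, so your proposal is in effect the first written-out proof, and its overall architecture (pass to Hilbert series, absorb the positive-dimensional factors one at a time via Theorem \ref{t1}, then deal with the remaining zero-dimensional factors) is exactly the route the paper intends. The reduction of Cohen--Macaulayness to $d=0$, the identity $\reg N=\max\{l:N_l\neq 0\}$ for finite length modules, and the first block of your induction (each Segre step against a positive-dimensional Cohen--Macaulay factor preserves nonnegativity of the coefficients, the degree $r$, and the strict positivity of the coefficient in degree $r$) are all correct.

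The gap is the step you yourself flag: for a further zero-dimensional factor $\mathfrak a_k$ with $\deg\mathfrak a_k\geqslant r$, the nonvanishing of the full Segre product does \emph{not} imply $(\mathfrak a_k)_r\neq 0$, because the Hilbert function of a finite length graded module may have internal zeros and the supports of the various zero-dimensional factors need not be nested. This cannot be repaired, since the statement itself fails in that generality: take $S_1=K[x]$, $M_1=K\oplus K(-2)$ (Hilbert function $1,0,1$, so $\reg M_1=2$) and $S_2=K[y]$, $M_2=K\oplus K(-1)$ (Hilbert function $1,1$, so $\reg M_2=1$); then $M_1{\underline{\otimes}} M_2\cong K$ is concentrated in degree $0$, hence has regularity $0$, while $\min_k\reg M_k=1$. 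Your argument, and the theorem, are correct when there is exactly one zero-dimensional factor (then the second block of your induction is empty), and more generally whenever the Hilbert functions of the zero-dimensional factors are supported on intervals without internal zeros (e.g.\ for standard graded algebras), in which case the top degree of the product is indeed $\min_k r_k$. As written, though, the last step is a genuine hole, and it is the one place where the theorem needs an additional hypothesis rather than additional care.
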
 
To end this section we exhibit two large classes of ideals that satisfy the hypothesis of Theorem \ref{main1}.
\begin{description}
\item[(I)] Let $\fn \Acal$ be a finite generated normal semigroup homogeneous, then by \cite{st}[13.14], we have that $\reg(K[\fn \Acal])< \dim (K[\fn \Acal])$. Hence the toric ring 
$K[\fn \Acal]$ satisfy the hypothesis of Theorem \ref{main1}. 
\item[(II)] Let $\Delta $ be a simplicial complex, and $K[\Delta ]$ be the Stanley-Reisner ring associated to $\Delta $. If $K[\Delta ]$ is a Cohen-Macaulay ring,
then by the main theorem of Reisner $\reg(K[\Delta ])< \dim (K[\Delta ])$ if and only if $\Delta $ is acyclic.
\end{description}

\end{document}